\documentclass[11pt, reqno]{article}

\usepackage{amssymb}
\usepackage{amsmath}
\usepackage{amsthm}
\usepackage{amscd}
\usepackage{cancel}
\usepackage{float}
\usepackage{graphicx}
\usepackage{amsfonts}
\usepackage{color}
\usepackage{xypic}
\usepackage{enumerate}
\usepackage{lipsum}
\usepackage{mathtools}
\usepackage[all]{xy}

\usepackage{hyperref}
\usepackage{xcolor}
\hypersetup{
    colorlinks,
    linkcolor={red!50!black},
    citecolor={blue!50!black},
    urlcolor={blue!80!black}
}

\usepackage{amsfonts}
\usepackage{cancel}
\usepackage[normalem]{ulem}
\setlength{\textheight}{21.5cm}% was%
\setlength{\textwidth}{16.5cm}% was16
\setlength{\oddsidemargin}{-0.1cm}
\setlength{\evensidemargin}{-0.1cm}

\newtheorem{conjecture}{Conjecture}
\newtheorem{theorem}{Theorem}
\newtheorem{corollary}{Corollary}

\newtheorem{proposition}{Proposition}

\theoremstyle{definition}
\newtheorem{definition}{Definition}
\newtheorem{remark}{Remark}

%Fields, Natural and Integernumbers

\newcommand{\KK}{\mathbb{K}}
\newcommand{\RR}{\mathbb{R}}

\newcommand{\tr}{{\rm tr}}
\newcommand{\T}{{\rm Tr}}
\newcommand{\B}{\mathtt{B}}
\newcommand{\A}{\mathsf{a}}
\newcommand{\bb}{\mathsf{b}}
\newcommand{\Tr}[1]{\mathsf{tr}_{#1}}

\newcommand{\E}{\mathcal{E}}
\newcommand{\EE}[2]{\mathcal{E}_n(#1,#2)}
\newcommand{\SE}[2]{\mathcal{S}_{#1}\mathcal{E}_{#2}}
\newcommand{\SM}[1]{\mathfrak{S}_{#1}}
\newcommand{\s}[2]{\mathcal{S}_{#1}\mathcal{E}_{#2}}
\newcommand{\TS}[2]{TS_{#1}B_{#2}}
\newcommand{\g}[1]{g_{#1}}
\newcommand{\n}{{\mathbf{n}}}
\newcommand{\PP}[1]{\mathsf{P}_{#1}}

\newcommand{\LL}{\mathbb{L}}

\begin{document}

\title{Invariants for singular links via the two parameter bt--algebra}

 \author{Marcelo Flores, Christopher Roque--Marquez }
% \address{}
% \email{}
%\keywords{ }
%\thanks{}
%\subjclass{57M25, 20C08, 20F36}
\date{}
\maketitle
\begin{abstract}
We construct a new invariant of singular links through representations of the singular braid monoid into the two parameters bt--algebra. Additionally, we recover this invariant by using the approach of Paris and Rabenda. Hence, we introduce the so called two parameter Singular bt--algebra. Finally, we provide the skein relations that define our invariant, and we prove that this invariant is more powerful than previous invariants of singular links in literature.
\end{abstract}

{\footnotesize 2020 Mathematics Subject Classification. Primary: 57K12, 57K14, 20F36.}

{\footnotesize \textit{Keywords}: singular links, singular braid monoid, bt-algebra, tied links.}

\section{Introduction}

The developments in knot theory grew up significantly since the discovery of the Jones polynomial in the 80's. Jones' original approach \cite{Jo85} can be extended for constructing invariants of more general knot-like objects through the following main ingredients: a braided monoid $M$ that is related with the knot-like objects of interest via analogues of Alexander and Markov theorems; an algebra $A$ together with a representation from $M$ into $A$; and lastly, a trace function defined on the algebra $A$ that is compatible with the relations established by the corresponding Markov theorem, which is  commonly called a Markov trace. An algebra that can be used in this context is called knot algebra, cf. \cite{jula7}. In \cite{Jo85,Jo87}, Jones uses the Artin braid group $B_n$ and the Temperley-Lieb algebra as the corresponding knot algebra to obtain the Jones polynomial, while the HOMFLY-PT polynomial is obtained by using the Iwahori-Hecke algebra .\smallbreak
 
In this paper we will focus on singular knots, which arose firstly in the context of Vassiliev invariants, one of the most general knot invariants in the sense that many of them can be deduced from Vassiliev invariants \cite{BirLin93,Vas90}. A singular link on $n$ components is the image of a smooth immersion of $n$ copies of the circle $S^1$ into $S^3$, that has finitely many transversal double points, which are called singular crossings. Thus, a classical link can be regarded as a singular link with no singular crossings. Hence, singular links generalize classical links in $S^3$. Two singular links are isotopic (or equivalent) if there is an orientation preserving self-homeomorphism of $S^3$ such that maps one into the other that fixes a small rigid disc around each singular crossing. \smallbreak 
The algebraic counterpart of singular links is the singular braid  monoid $SB_n$ \cite{Baez92,bir93}. This  monoid plays the role that the Artin's braid group $B_n$ plays for classical links; that is, there exist analogues of Alexander \cite{bir93} and Markov theorems \cite{ge2005} for singular links that relate singular links with the monoid $SB_n$. This allows to construct singular link invariants by using the Jones method, that is by using an algebra that supports a Markov trace and a proper representation from the singular braid monoid $SB_n$ into such algebra. \smallbreak 
In \cite{pare}, Paris and Rabenda introduce the singular Hecke algebra, denoted by $SH_n:=SH_n(u)$, as the quotient of the monoid algebra $\mathbb{C}(u)[SB_n]$ over the ideal generated by the quadratic relations of the Iwahori--Hecke algebra, that is,  $\sigma_i^2-(u-1)\sigma_i -u $, for $1\leq i\leq n-1$. For $d\geq 0$, let $S_dB_n$ be the set of singular braids with $d$ singular crossings. Then, we have the natural graduation
\begin{equation}\label{grad}
 SH_n=\bigoplus_{d=0}^{\infty}S_dH_n,
\end{equation}
where $S_dH_n$ denotes the subspace of $SH_n$ spanned by $S_dB_n$. The algebra $SH_n$ is infinite dimensional, however each $S_dH_n$ is finite dimensional over $\mathbb{C}$. Using this fact, they construct a Markov trace on the sequence $\{S_dH_n\}_{n=1}^{\infty}$, from which the HOMFLY--PT analogue for singular links is derived, which is denoted by $\widehat{I}$. Moreover, this invariant recovers the Kauffman-Vogel invariant \cite{KauVog92} as a specialization, for details see \cite[Lemma 5.6]{pare}.    \smallbreak

In \cite{aiju}, Aicardi and Juyumaya introduce the algebra of braids and ties (or bt--algebra), denoted by $\mathcal{E}_n:=\mathcal{E}_n(u)$. This algebra is defined by abstractly considering it as the subalgebra of the Yokonuma--Hecke algebra generated by the braid generators and the family of idempotents appearing in the quadratic relations of such generators. In \cite{aijuMMJ}, it is proved that $\mathcal{E}_n$ supports a Markov trace; the invariant for classical links $\Delta$ is derived by using Jones method. It is worth noting that the invariant $\Delta$ coincides with the HOMFLY--PT polynomial on knots and it distinguishes pairs of links that are  not distinguished by the HOMFLY--PT polynomial, see \cite[Section7.2]{aijuMMJ}. In \cite{chporep}, the authors give a presentation for the Yokonuma--Hecke algebra with a different quadratic relation, which is obtained by re-scaling the generators, for details see \cite[Remark 1]{chporep}. Consequently, the bt--algebra can also be presented with such quadratic relation by applying the same argument, we denote the bt--algebra with this presentation by $\mathcal{E}_n(\sqrt{u})$. Surprisingly, the classical link invariant derived using this presentation, denoted by $\Theta$, is not equivalent to $\Delta$, \cite[Section 7.2]{aijuMMJ}. This fact motivates the definition of the two parameter bt algebra \cite{aiju2020}, denoted by $\mathcal{E}_n(u,v)$, which includes a quadratic relation that depends of two parameters in its presentation. This algebra is isomorphic to the bt--algebra and therefore it supports a Markov trace, see \cite[Proposition 1]{aiju2020}.  Consequently, the invariant for classical links $\Upsilon$ is derived. It is worth noting that the invariant $\Upsilon$ distinguishes pairs of links that are not distinguished by $\Delta$ and either by $\Theta$, for details see \cite[Section 6]{aiju2020}. \smallbreak

In \cite{aiju2}, the authors introduce the concept of tied links and tied braids, which generalize to classical links and classical braids, respectively. The tied braid monoid, denoted by $TB_n$ is the algebraic counterpart of tied links. That is, there exists analogues of Alexander and Markov theorems that relate tied links and tied braids, see \cite[Theorem 3.5]{aiju2} and \cite[Theorem 3.7]{aiju2}, respectively. Additionally, we have that the bt--algebra can be regarded as a quotient of monoid algebra $\mathbb{C}(\sqrt{u})[TB_n]$. Consequently, the authors obtain an invariant of tied links by applying the Jones method to the bt--algebra $\mathcal{E}_n$, denoted by $\widetilde{\Delta}$, which extends the classical link invariant $\Delta$. Analogously, the invariants $\Theta$ and $\Upsilon$ can be extended to the tied links invariants $\widetilde{\Theta}$ and $\widetilde{\Upsilon}$ by using the algebras $\mathcal{E}_n(\sqrt{u})$ and $\mathcal{E}_n(u,v)$, respectively.

% The monoid $TB_n$ is defined through generators and relations, which emerge from the diagrammatical interpretation of the defining generators of the bt--algebra, see \cite[Section 4]{aijuMMJ}. Thus, the bt--algebra can be regarded as a quotient of the monoid algebra $\mathbb{C}(TB_n)$.\smallbreak

 In \cite{aiju2018}, four invariants for singular links are defined, denoted by $\Psi_{x,y}, \Phi_{x,y}, \Psi'_{x,y}, \Phi'_{x,y}$. All of them are derived by using Jones method on the Markov trace on the bt--algebra $\E_n(u)$, though using different homomorphism from the monoid $SB_n$ to $\E_n(u)$ in each case. Moreover, it is proved that these invariants distinguish pairs of singular links that are not distinguished by the Paris--Rabenda invariant $\widehat{I}$, see \cite[Theorem 16]{aiju2018}. Additionally, it is proved that the tied braid monoid $TB_n$ can be regarded as the semidirect product $\PP{n}\rtimes B_n$, where $\PP{n}$ is the monoid of the set partitions of $\{1,\ldots,n\}$. This fact motivates the introduction of the concept of ct--link (combinatorial tied link) that formalize in some way the concept of tied links previously introduced in \cite{aiju2}. Additionally, the authors also introduce the tied singular braid monoid $TSB_n$ as the semidirect product of $\PP{n}$ and the singular braid monoid $SB_n$ \cite{bir93}. Consequently, the authors introduce the concept of cts--link (combinatorial singular tied link) and the invariants $\Psi_{x,y}, \Phi_{x,y}, \Psi'_{x,y}, \Phi'_{x,y}$ are extended to invariant of cts--links, and skein rules are given for such extensions. \smallbreak

 In \cite{arju2020}, Arcis and Juyumaya, motivated by the fact that $TB_n$ can be regarded as semidirect product, introduce the concept of tied monoids. These monoids are defined as semidirect products that are commonly built from braid groups and their underlying Coxeter group acting on monoids of set partitions. Additionally, they define a tied algebra associated to each tied monoid as quotient of the corresponding monoid algebra. The bt--algebras of type $\mathtt{A}$ and $\mathtt{B}$ \cite{fl} are particular cases of these algebras, which are associated to the monoids $TB_n$ and $TB_n^{\B}$, respectively. Similarly, the tied singular braid monoid $TSB_n$ can be considered as a tied monoid. Hence, we can obtain a tied algebra associated to $TSB_n$, which is introduced in Section \ref{sec:invariant singular bt}. \smallbreak
 
Thus, the behavior of the classical links invariants $\Delta$, $\Theta$ and $\Upsilon$ referenced above suggests that a singular link invariant constructed via representations of the monoid $SB_n$ into the two parameter bt--algebra $\mathcal{E}_n(u,v)$ has to distinguish pairs of singular links that the invariants obtained in \cite{aiju2018} do not. Additionally, it is also able to construct a singular link invariant using the Paris and Rabenda approach though this time using a tied algebra associated to the monoid $TSB_n$. The invariant obtained by this process should be powerful than $\widehat{I}$ since the invariants derived from tied algebras are powerful than the HOMFLY--PT polynomial. In this work, we obtain singular links invariants derived from these two processes and we prove that both of them are equivalent.

The article is organized as follows. In Section~\ref{pre}, we provide the notation and some basic results, and we introduce the algebras and monoids (groups) that we will use along this work. In Section~\ref{newinv}, we construct two singular links invariants by using two different representation from $SB_n$ into the two parameter bt--algebra $\E_n(u,v)$. Additionally, we extend these invariants to invariants of cts--links and we give skein rules for them. Finally, we compare these invariants with singular links invariants defined previously in \cite{aiju2018} and \cite{pare}. In Section~\ref{singularbt}, we introduce the two parameter Singular bt--algebra, denoted by  $\SE{}{n}:=\SE{}{n}(u,v)$, which emerges as a tied algebra associated to the monoid $TSB_n$ introduced in \cite{aiju2018}. We prove that this algebra supports a Markov trace by using the Paris and Rabenda approach. Consequently, we derive an invariant of cts--links by applying Jones method. Finally, we prove that such invariant is equivalent to the obtained in Section~\ref{newinv}.

\section{Preliminaries}\label{pre}
In this section, we recall the main results and we fix notation that will be used in the sequel.
\subsection{Set partitions}
Let $\PP{n}$ be the set formed by the set-partitions of $\n:=\{1,2, \ldots,n\}$.
    The subsets of $\n$ entering a partition are called blocks. For convenience, we will omit the subsets of cardinality one (single blocks) in the partition. For instance, the partition $I=(\{1,2,3\},\{4,6\},\{5\},\{7\})$ in $\PP{7}$ will be simply written as $I=(\{1,2,3\},\{4,6\})$. Recall that the cardinality of $\PP{n}$ is the $n$-th Bell number, denoted by $b_n$.
    The elements of $\PP{n}$ can be represented by its scheme, where every block is represented by 
    a connected component of its elements joined by arcs as follows (see [16, Subsection 3.2.4.3]), the point $i$ is connected by an arc to the point $j$ if both are in the same block, $i<j$, and there is no $k$ in the same block with $i<k<j$. In Fig. \ref{scheme} a set partition is represented by arcs. \smallbreak

\begin{figure}[h!]
        \centering
         \includegraphics[scale=1]{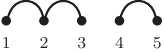}\label{scheme}
        \caption{Scheme of the partition $(\{1,2,3\},\{4,5\}).$}
\end{figure}

The symmetric group $\mathfrak{S}_n$ acts naturally on $\PP{n}$ by permuting each element of every block of a partition.
% \sout{Indeed, let $I=(I_1,\dots,I_m)\in \PP{n}$, and $w\in \mathfrak{S}_n$, then the action $w(I)$ is given by $w(I)=(w(I_1),\dots,w(I_m))$, where $w(I_k)$ is the set obtained by applying $w$ to the set $I_k$.}
% {\blue Tal vez no sea necesario explicar con ese detalle y baste con decir que actua de forma natural permutando los elementos de cada bloque de una partición. O bien, no ponerlo y directamente hablar de la acción de trenzas sobre particiones}
\smallbreak

For $I=(I_1,\dots,I_m),\ J=(J_1,\dots,J_s) \in \PP{n}$, we say that $I$ refines $J$ if every block of $J$ is a union of some blocks of $I$, and we denote it by $I\preceq J$. Then $\preceq$ defines a partial order on $\PP{n}$. Observe that the empty partition, denoted by $\mathbf{1}_n$, is the least element of $\PP{n}$ respect to this order.\smallbreak

We can define the following product in $\PP{n}$
$$IJ:={\rm min}\{K\in \PP{n} \ |\ I\preceq K \wedge J\preceq K\}.$$
Thus, $\PP{n}$ has structure of commutative monoid with this product. More precisely, we have that the monoid $\PP{n}$ is presented by generators $\mu_{i,j}$ for all $i,j\in\n$ with $i < j$ subject to the following relations:

\begin{equation}\label{rel part}
\begin{array}{rcl}
  \mu_{i,j}^2   &= & \mu_{i,j}\quad \text{for all $i<j$,} \\
    \mu_{i,j}\mu_{k,l} &= & \mu_{k,l}\mu_{i,j}\quad \text{for all $i<j$ and $k<l$,}\\
    \mu_{i,j}\mu_{j,k} &=&\mu_{i,k}\mu_{j,k}=\mu_{i,k}\mu_{i,j} \quad \text{for all $i<j<k$,}
\end{array}
\end{equation}
where $\mu_{i,j}$ denotes the partition of $\n$ whose unique non single block is $\{i,j\}$ for $i<j$, see \cite{Fitz03}.

\subsection{Singular links and singular braids}
A \textit{singular link on $k$ components} is the image of a smooth immersion of $k$ copies of
the circle $S^1$ into $S^3$, that has finitely many transversal double points, which are called \textit{singular crossings}. So, a singular link is like a classical link, but with a finite number of transversal self-intersections. Two singular links are isotopic (or equivalent) if there is an orientation preserving self-homeomorphism of $S^3$ such that maps one into the other, and which leaves a small rigid disc around each singular crossing unaltered. Thus, a classical link can be regarded as a singular link with no singular crossings. Hence, singular links generalize classical links in $S^3$. \smallbreak

The \textit{Artin braid group}, denoted by $B_n$, is the group defined by generators $\sigma_1,\ldots,\sigma_{n-1}$ and relations:
\begin{align}
    \sigma_i \sigma_j & = \sigma_j \sigma_i \quad \textrm{ if } |i-j|\geq 2, \label{B1} \\
    \sigma_i \sigma_j \sigma_i &= \sigma_j \sigma_i \sigma_j \quad \textrm{ if } |i-j|=1, \label{B2}
\end{align}
% A \textit{braid} on $n$ strands is an embedding of $n$ arcs in $\mathbb{R}^2\times [0,1]\subset S^3$ whose tangent vector is never horizontal. Two braids are equivalent if there is a preserving orientation homeomorphism between them, fixing the .
% The set of braids on $n$ strands forms a group, denoted by $B_n$, and it can be presented by the generators $\sigma_1,\dots,\sigma_{n-1}$ subject to the relations

\begin{figure}[h]
    \begin{minipage}[t]{.45\textwidth}
        \centering
         \includegraphics[scale=1]{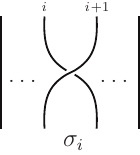}
        % \subcaption{generator $\sigma_i$}% \label{fig:sigma_i}
    \end{minipage}
    \hfill
    \begin{minipage}[t]{.45\textwidth}
        \centering
        \includegraphics[scale=1]{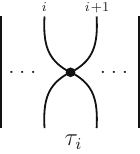}
        % \subcaption{generator $\tau_i$}%\label{fig:tau_i}
    \end{minipage}
    \caption{Generators of $SB_n$.} \label{fig:gens SBn}
\end{figure}
\noindent where the generator $\sigma_i$ represents the braid whose $i$--th strand goes under the $(i+1)$--th strand and the rest are vertical (see Figure \ref{fig:gens SBn}). Let $\alpha$ be a braid in the group $B_n$; the closure of $\alpha$, denoted by $\widehat{\alpha}$, is obtained by joining its corresponding endpoints with simple arcs. From every closed braid $\widehat{\alpha}$ we obtain a classical link in $S^3$. Moreover, by the Alexander theorem, every link can be obtained by closing a braid. In this way, the isotopy between two equivalent links is reflected on the corresponding braids, and that is the Markov  theorem, i.e., two links as closed braids $\widehat{\gamma}_1$ and $\widehat{\gamma}_2$ are equivalent, if and only if, the braid $\gamma_1$ can be transformed into the braid $\gamma_2$ through the following moves:

\begin{itemize}
    \item[(M1)] $\alpha\beta \rightleftharpoons \beta\alpha \,$, \label{M1}
    \item[(M2)] $\beta \rightleftharpoons \beta\sigma_n^{\pm 1}\,$, \label{M2}
\end{itemize}
\noindent for any $\alpha, \beta \in B_n$. Observe that in the second move (M2), the braid $\beta$ is thought as a braid on $n+1$ strands by adding an additional strand to the right, but with do not make any distinction on the notation because there is a natural inclusion $B_n\hookrightarrow B_{n+1}$. Also, we can define the inductive limit $B_{\infty}:=\bigcup_{n\geq 1} B_n$. 

A \textit{singular braid on n strands} is the image of a smooth immersion of $n$ arcs in $S ^3$, that has finitely many double points (singular crossings). Thus, a singular braid is alike a classical braid, but with a finite number of singular crossings. The set of singular braids on $n$ strands, denoted by $SB_n$, forms a monoid with the usual concatenation, which is called the \textit{singular braid monoid} \cite{Baez92,bir93}. This monoid is presented with generators $\sigma_1, \ldots, \sigma_{n-1}, \sigma_1^{-1}, \ldots, \sigma_{n-1}^{-1}$  satisfying the usual braid relations (\ref{B1})-(\ref{B2}) and the generators $\tau_1,\ldots,\tau_{n-1}$ subject to the relations:
\begin{align}
% \nonumber % Remove numbering (before each equation)
\sigma_i\tau_j&=  \tau_i\sigma_j \quad  \text{for $|i-j|\not=1$,}\label{SB1} \\
  \sigma_i\sigma_j\tau_i&= \tau_j\sigma_i\sigma_j   \quad  \text{for $|i-j|\geq 2$,}\label{SB2}\\
     \tau_i\tau_j&=\tau_j\tau_i   \quad  \text{for $|i-j|\geq 2$.}\label{SB3}
\end{align}

\noindent Analogously, there is a natural inclusion $SB_n\hookrightarrow SB_{n+1}$ and the inductive limit $SB_{\infty}:=\bigcup_{n\geq 1} SB_n$ is well defined. 
\begin{remark}

There is a natural projection from $B_n$ onto $\mathfrak{S}_n$ induced by the mapping $\sigma_i\mapsto s_i$, where $s_i$ is the transposition $(i\ i+1)$. This projection extends to $SB_n$ by mapping $\tau_i\mapsto \sigma_i$. 
% By simplicity, the image {\cyan $\pi(\alpha)$} \sout{for any $\alpha\in SB_n$} \sout{through $\pi$} will be denoted by $\pi_{\alpha}$, {\cyan for any $\alpha\in SB_n$}. 
Observe that since $\SM{n}$ acts on the partition monoid $\PP{n}$, then we have natural actions of $B_n$ (and $SB_n$ respectively) on $\PP{n}$. 

\end{remark}

For singular braids and singular links there are analogues of the Alexander \cite[Lemma 2]{bir93} and Markov theorems \cite{ge2005}. Thus, we have that isotopy classes of singular links are in bijection with equivalence classes of $SB_{\infty}$. The equivalence relation $\sim$ on $SB_{\infty}$ is given by the corresponding Markov moves, two braids on $SB_{\infty}$ are equivalent if one can be obtained from the other through a finite sequence of the following moves:

\begin{enumerate}[(MS1)]
    \item $\alpha\beta \rightleftharpoons \beta\alpha$, \label{MS1}
    \item $\beta \rightleftharpoons \beta\sigma_n^{\pm 1}$, \label{MS2}
\end{enumerate}
where $\alpha,\beta \in SB_n$.

\subsection{Tied singular links and the monoid of tied singular braids} \label{ctlinkssect}
The concept of tied link was originally introduced in \cite{aiju2}0. Subsequently, the concept of combinatoric tied link, or ct--link for shorten, is introduced by the same authors in \cite{aiju2018}. Both concepts are equivalent. Therefore, from now on we will refer as ct--links for our purposes.\smallbreak
 % We begin recalling the concept of tied links introduced in \cite{aiju2}. A \emph{tied link} is a link in $S^3$ together with ties connecting its components $\{C_1,\dots, C_m\}$ in some way. The ties are drawn as springs since they can be contracted and extended, letting their extremes to move freely along the components,  see Figure. We use the notation $C_i\leftrightsquigarrow C_j$ to indicate that the components $C_i$ and $C_j$ are connected by a tie.\smallbreak 
 % A \emph{ct--link} is a pair $(L,I)$, where $L$ is a link in $S^3$ and $I$ is a partition of its set of components $\{C_1,\ldots, C_k\}$. 
%  More precisely, we have that $C_i\leftrightsquigarrow C_j$ if and only if $i$ and $j$ belong to the same block of $I$.  \smallbreak
 Let $\mathcal{L}_k$ be the set of links in $S^3$ with $k$ components. Thus, a ct--link of $k$ components is an element in  $\mathcal{L}_k\times \PP{k}$. Then, the set of all ct--links is given by
 $$\mathcal{L}^t=\bigsqcup_{k\in \mathbb{N}}\mathcal{L}_k\times \PP{k}.$$
 Observe that the numbering of the components of a link is arbitrary. Then, an isotopy between two links $L$ and $L'$ induces a bijection $w_{L,L'}\in S_k$ between the components of $L$ and $L'$. We say that two ct--links $(L,I)$ and $(L',I')$ are $t$--isotopic if $L$ and $L'$ are isotopic as links and $I=w_{L,L'}(I')$.

     \begin{figure}[h!]
         \centering
         \includegraphics{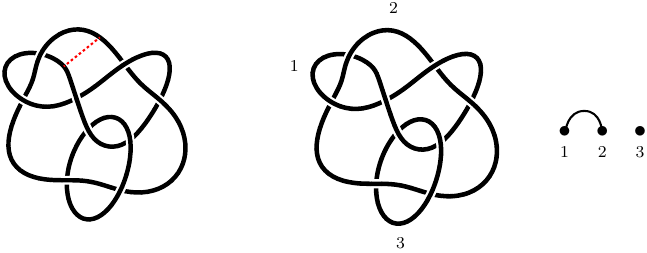}
         \caption{Equivalence between tied links and ct--links.}
         \label{tiedctlink}
     \end{figure}

\begin{remark}
 Let be $(L,I)$ a ct--link, then it can be regarded as the tied link $L$ with ties connecting the components that belong to the same block of the partition $I$, see Figure \ref{tiedctlink}.
\end{remark}

 \smallbreak
 The algebraic counterpart of ct--links (or tied links) is the  \textit{tied braid monoid}, denoted by $TB_n$,  which can be 
presented by generators $\sigma_1,\ldots ,\sigma_{n-1}$, $e_1, \ldots ,e_{n-1}$  satisfying the relations (\ref{B1})--(\ref{B2}) together with  the following relations:
\begin{align}\label{TS1}
e_ie_j & = e_j e_i,\quad 
e_i^2  = e_i, \quad e_i \sigma_i  =    \sigma_i e_i \qquad \text{for all $i,j$},
\\
\label{TS2}
e_i \sigma_j  & = 
 \sigma_je_i\qquad \text{if $\vert i  -  j\vert >1$},
\\
\label{TS3}
e_i\sigma_j\sigma_i & =   \sigma_j\sigma_ie_{j},  \quad 
e_ie_j\sigma_i  =  e_j \sigma_i e_j =  \sigma_ie_ie_j \qquad \text{if $\vert i  -  j\vert =1$},
\\
\label{TS4}
 e_i\sigma_j\sigma_i^{-1} & =  \sigma_j\sigma_i^{-1}e_j \qquad \text{if $\vert i - j\vert  = 1$}.
\end{align}

\begin{remark}
The monoid $TB_n$ can be also regarded as the semidirect product $\PP{n}\rtimes B_n$, yielded by the natural action of $B_n$ on $\PP{n}$, see \cite[Theorem 3]{aiju2018}.
\end{remark}
Furthermore, there exist analogues of the Alexander and Markov theorems in the context of tied links, for more details see \cite[Theorem 3.5]{aiju2} and \cite[Theorem 3.7]{aiju2}, respectively.\smallbreak

% respectively. More precisely, two tied braids in $ TB_n$ are $\sim_t$--equivalent if one can be obtained one from the other by a finite sequence of following moves:
% \begin{enumerate}[(TM1)]
%     \item $\alpha\beta$ can be exchanged with $\beta\alpha$,
%     \item $\alpha$ can be exchanged with $\alpha\sigma_n^{\pm 1}$,
%     \item $\alpha$ can be exchanged with $\alpha\nu_{i,j}$ whenever $\pi_{\alpha}(i)=j$.
% \end{enumerate}
% Then we have the following result.

% \begin{theorem}\cite[Theorem 3.7]{aiju2}
% Two tied braids have $t$-isotopic closures if and only if they are $\sim_t$ --equivalent.
% \end{theorem} 

There also exists an analogue concept in the context of singular links. Namely, a \textit{cts-link}, or \textit{combinatoric tied singular link}, with $k$ components is a pair $(L,I)$, where $L$ is a singular link in $S^3$ and $I\in \PP{k}$.  Let $\mathcal{L}_{k,m}$ be the set of singular links in $S^3$ with $k$ components and $m$ singular crossings. Then, the set of all cts--links is given by
 $$\mathcal{L}^{ts}=\bigsqcup_{k\geq 0, m\geq 0}\mathcal{L}_{k,m}\times \PP{k}.$$
Analogously to the classical case, two cts--links $(L,I)$ and $(L',I')$ are $ts$--isotopic if $L$ and $L'$ are isotopic as singular links and $I=w_{L,L'}(I')$, where $w_{L,L'}$ is the permutation (on the set of components) induced by the isotopy between $L$ and $L'$.\smallbreak

\begin{remark} \rm
Note that a classical link $L\in \mathcal{L}_{k}$ can be considered as the ct--link $(L,\mathbf{1}_k)$. Analogously, a singular link $L'\in \mathcal{L}_{k,m}$ can be regarded as the cts--link $(L',\mathbf{1}_k)$. Since $\mathcal{L}_k=\mathcal{L}_{k,0}$, cts--links generalize ct--links, and both concepts generalize classical links.
\end{remark}

The analogue of the monoid $TB_n$ in this context is the \textit{tied singular braid monoid}, denoted by $TSB_n$. The monoid $TSB_n$ is presented by generators $\sigma_1,\ldots , \sigma_{n-1}$, $\tau_1, \ldots , \tau_{n-1}, e_1,\ldots ,e_{n-1}$   
satisfying the relations (\ref{B1})--(\ref{B2}), (\ref{SB1})--(\ref{SB3}), (\ref{TS1})-(\ref{TS4}) together with:
\begin{align}
 \tau_ie_j &= e_j\tau_i\quad\text{ if $\vert i-j\vert \not=1$,} 
% \tau_ie_j  =\sigma_ie_j\sigma_i^{-1}\tau_i,\quad\text{ if $\vert i-j\vert=1$,} 
 \label{TSB1}
 \\
 e_i\tau_j\tau_i & = \tau_j\tau_ie_j, \quad
e_i\tau_j\sigma_i  = \tau_j\sigma_ie_j,  \quad
e_i\sigma_j\tau_i = \sigma_j\tau_i e_j \quad\text{ if $\vert i-j\vert=1$,}
 \label{TSB2}
\\
 e_ie_j\tau_i &= e_j\tau_ie_j = \tau_ie_ie_j,
 % \quad\text{ if $\vert i-j\vert =1$.} 
 \quad \tau_ie_j  =\sigma_ie_j\sigma_i^{-1}\tau_i \quad\text{ if $\vert i-j\vert=1$}.
  \label{TSB3}
\end{align}

This monoid can be also realized as the semidirect product $\PP{n}\rtimes SB_n$ yielded by the natural action of $SB_n$ on $\PP{n}$, see \cite[Theorem 9]{aiju2018}. Then, every element of $TSB_n$ can be regarded as pair $(I,\alpha)$, where $I\in \PP{n}$ and $\alpha\in SB_n$.\smallbreak
%Here the  Singular Hecke algebra $SH_n(v)$ but with quadratic relation as used in quantum.

Additionally, there exist analogues of the Alexander and Markov theorems for cts--links, for more details see \cite[Theorem 10]{aiju2018} and \cite[Theorem 11]{aiju2018}, respectively.

% On the other hand, we have that two tied singular braids in $TSB_n$ yield the same cts--link if and only if they are $\sim_{ts}$--equivalent, that is, one of them can be obtained from the other one by applying the following replacements:
% \begin{enumerate}[(MTS1)]
%     \item\label{MTS1} $(I,\alpha)$ can be replaced by $(I,\alpha)(\mu_{i,j},1)$, where $i$ and $j$ belong to the same cicle of $\pi_{\alpha}$.
%     \item\label{MTS2} $(I,\alpha)(I',\beta)$ can be replaced by $(I',\beta)(I,\alpha)$.
%     \item\label{MTS3}  $(I,\alpha)$ can be replaced by $(I,\alpha\sigma^{\pm1})$;
% \end{enumerate}
% for all $(I,\alpha)\in TSB_n$, see \cite[Theorem 11]{aiju2018}.

\subsection{The two parameter bt--algebra}\label{bttwo}
Let $u,\sqrt{u}$ be variables such that $(\sqrt{u})^2=u$. The one paratemer bt--algebra $\mathcal{E}_n:=\mathcal{E}_n(u)$ was introduced in \cite{aiju}, and it is defined as the quotient of the monoid algebra $\mathbb{C}(\sqrt{u})[TB_n]$  over the ideal generated by

    $$\sigma_{i}^2-1 - (u - 1)e_i (1 + \sigma_i ) \quad \text{for all $i$.}$$
    
Thus, the algebra $\mathcal{E}_n(u)$ is the algebra generated by $T_1,\dots,T_{n-1}$ and $E_{1},\dots,E_{n-1}$ subject to relations (\ref{B1})--(\ref{B2}) and (\ref{TS1})--(\ref{TS4}), replacing $\sigma_i$ and $e_i$ by $T_i$ and $E_i$, respectively, together with the quadratic relations

    $$T_{i}^2=1 +(u - 1)E_i + (u - 1)E_iT_i \quad \text{for all $i$.}$$
    
Consider now the following elements in $\E_n(u)$: $$V_i=T_i+\left(\frac{1}{\sqrt{u}-1}\right)E_iT_i \quad \text{for all $i$}.$$
Then, the elements $V_i$'s also satisfy the defining relations of $\mathcal{E}_n$ replacing $T_i$ by $V_i$, however these elements satisfy the following quadratic relation
\begin{equation}\label{newquad}
  V_i^2=1+\left(\sqrt{u}-\frac{1}{\sqrt{u}}\right)E_iV_i.
\end{equation}
Thus, the bt--algebra can be also presented by generators $V_1,\ldots,V_{n-1}$ and $E_{1},\dots,E_{n-1}$ subject to the relations noted previously. We will denote the algebra with this presentation by $\E_n(\sqrt{u})$ as in \cite{aiju2018}.\smallbreak
Let $v$ be a variable commuting with $u$, and we set  $\mathbb{K}:=\mathbb{C}(u,v)$.

\begin{definition}\label{def:E(u,v)}
    The \textit{two parameter bt--algebra}, denoted by $\mathcal{E}(u,v)$ is defined as the $\mathbb{K}$--algebra presented by braid and tied generators $R_1,\dots,R_{n-1}$ and $E_1,\dots, E_{n-1}$, respectivley,  satisfying the following relations:
    \begin{align}
        E_iE_j&=E_jE_i \quad \text{for $|i-j|\geq 2$,} \label{bt1}\\
        E_i^2&=E_i \quad \text{for all $i$,}\label{bt2}\\
        % E_i R_j  &= R_jE_i\quad \text{if $\vert i  -  j\vert >1$},\label{bt3}\\
        E_i R_j  &= R_jE_i\quad \text{if $\vert i  -  j\vert \neq 1$},\label{bt3}\\
        % E_i R_i  &=    R_i E_i  \quad\text{for all $i$},\label{bt4}\\
        E_iR_jR_i & =   R_jR_iE_j  \qquad \text{if $\vert i  -  j\vert =1$},\label{bt5}\\
        E_iE_jR_i  &=  E_j R_i E_j =  R_iE_iE_j \qquad \text{if $\vert i  -  j\vert =1$},\label{bt6}\\
R_iR_j&=R_jR_i \quad \text{for $|i-j|\geq 2$,}\label{bt7}\\
    R_jR_iR_j&=R_iR_jR_i\qquad \text{if $\vert i  -  j\vert =1$},\label{bt8}\\
    R_{i}^2&=1 +(u - 1)E_i + (v - 1)E_iR_i \quad \text{for all $i$}.\label{bt9}
 \end{align}
\end{definition}
Thus, the algebra $\EE{u}{v}$ can be regarded as the quotient of the monoid algebra $\KK(TB_n)$ over the ideal generated by 
$\sigma_{i}^2 -1 -(u - 1)e_i - (v - 1)e_i\sigma_i $.
Note also that $R_i$ is invertible. Namely, we have
\begin{equation}\label{invbt}
    R_{i}^{-1}=R_i +( 1-v)u^{-1}E_i + (u^{-1} - 1)E_iR_i.
\end{equation}

\begin{remark}
Note that the algebra $\E_n(u)$ corresponds to $\EE{u}{u}$ and $\E_n(\sqrt{u})$ is obtained as $\EE{1}{v}$, with $v=\sqrt{u}-\sqrt{u}^{-1}+1$.
\end{remark}

The algebra $\EE{u}{v}$  is isomorphic to a one parameter bt--algebra, see \cite[Proposition 1]{aiju2020}. Then, this algebra supports a Markov trace since the original bt--algebra does \cite[Theorem 3]{aijuMMJ}. More precesily, let $\A$ and $\bb$ variables that commute each other and with $u$ and $v$. There exists a unique Markov trace $\Tr{}:=\{\Tr{n}:\EE{u}{v}\to \KK(\A,\bb) \}_{n\geq 1}$ on $\EE{u}{v}$, where $\Tr{n}$'s are linear maps satisfying $\Tr{n}(1)=1$ and the following rules:
\begin{enumerate}[(i)]
    \item $\Tr{n}(XY)=\Tr{n}(YX),$
    \item $\Tr{n+1}(XR_n)=\Tr{n+1}(XE_nR_n)=\A\Tr{n}(X),$
    \item $\Tr{n+1}(XE_n)=\bb\Tr{n}(X),$
\end{enumerate}
\noindent where $X,Y\in \EE{u}{v}$, see \cite[Propositon 2]{aiju2020}. 

\begin{remark}\label{rem:compinv}
   	Let be $\Delta$, $\Theta$ and $\Upsilon$ are the invariants of classical links that are derived by using Jones method on algebras $\mathcal{E}(u)$, $\mathcal{E}(\sqrt{u})$ and $\EE{u}{v}$, respectively. Recall that the invariants  $\Delta$ and $\Theta$ correspond
to specializations of $\Upsilon$. More precisely, we can recover them by setting $u = v$ and $u = 1$ with $v = \sqrt{u} - \sqrt{u}^{-1}+1$, respectively.\smallbreak  In \cite{chjukala}, it is proved that $\Theta$ coincides with the HOMFLY--PT polynomial on knots, but it distinguishes links that are not distinguished by the HOMFLY--PT polynomial. Similarly, the invariant $\Delta$ is also more powerful than the HOMFLY--PT polynomial. However, the pairs distinguished by $\Delta$ and $\Theta$ do not coincide in some cases. Additionally, we have that $\Upsilon$ distinguishes pairs that are not distinguished by $\Delta$ or $\Theta$. For instance, we have the following table 
	
\begin{center}
	\begin{tabular}{|c|c|c|c|c|}
		\hline
		Link & Link & $\Upsilon$& $\Theta$& $\Delta$ \\
		\hline
		$L11n358\{0, 1\}$ &  $L11n418\{0, 0\}$ & $\bullet$ & $ \bullet$ &  \\
		\hline
		$L11n358\{1, 1\}$& $L11n418\{1, 0\}$ & $\bullet$  &  & $\bullet$  \\
		\hline
		$L11n356\{1, 0\}$ &  $L11n434\{0, 0\}$ & $\bullet $  &  & $\bullet $ \\
		\hline
		$L11a467\{0, 1\}$ &  $L11a527\{0, 0\}$ &$\bullet$  & $\bullet$  &  \\
		\hline
	\end{tabular}
\end{center}

\noindent where the dot means that the pairs of links in the left are distinguished by the invariant in the corresponding column, for details see \cite[Section 6]{aiju2020}. The notation used here is taken from the web page LinkInfo \cite{linkinfo}.
\end{remark}

\subsection{Some known invariants for singular links}
In this section, we recall two invariants for singular links in literature that are derived via Jones method. This invariants motivate our approach in the sequel.
\subsubsection{The Paris--Rabenda invariant}
Let  $H_n:=H_n(u)$ be the Iwahori--Hecke algebra, that is the $\mathbb{C}(u)$--algebra presented by generators $h_1,\ldots, h_{n-1}$ subject to the following relations:
\begin{align}
    h_i h_j &= h_j h_i \qquad \text{if $|i-j|>1$,} \label{H1}\\
    h_i h_j h_i &= h_j h_i h_j \qquad \text{if $|i-j|=1$,} \label{H2}\\
    h_i^2 &= u+(u-1) h_i \qquad \text{for all $i$.} \label{H3}
\end{align}
Thus, $H_n$ is the quotient of the group algebra $\mathbb{C}(u)[B_n]$ over the ideal generated by the quadratic relations (\ref{H3}).

% \begin{remark}
% About change of quadratic relations (\ref{H3}) and $h_i^2 = q + (q-1)h_i$. All the following results can be verified with the quadratic relation (\rf{H3}).
% \end{remark}

In \cite{pare}, Paris and Rabenda introduce the \textit{Singular Hecke algebra}, denoted by $SH_n:=SH_n(u)$. This algebra is a natural generalization of the Iwahori--Hecke algebra and it is defined as the quotient of the monoid algebra $\mathbb{C}(u)[SB_n]$ by the ideal generated by the elements $\sigma_i^2-u-(u-1)\sigma_i, $ for $0\leq i\leq n-1$. Let be $g_i$ and $p_i$ the class of $\sigma_i$ and $\tau_i$ in the quotient, respectively. Thus, $SH_n$ is presented by braid generators $g_1,\ldots,
g_{n-1}$ and singular generators $p_1,\dots,p_{n-1}$ subject to the following relation
\begin{align}
g_ig_j  =  g_j g_i, &\quad p_ip_j= p_j p_i,\quad p_ig_j= g_j p_i \qquad \text{if $\vert i - j\vert \not= 1$},
 \label{SH1}\\
g_ig_jg_i = g_jg_ig_j, &\quad  g_ig_jp_i = p_jg_ig_j \qquad \text{if $\vert i - j\vert = 1$}, \label{SH2}\\
g_i^2 = & \,u + (u-1)g_i \qquad \text{for all $i$.}\label{SH3}
\end{align}
For $d\geq 0$, let $S_dB_n$ be the set of singular braids with $d$ singular points. Then, we set $S_dH_n$ as the $\mathbb{C}(u)$--linear subspace of $SH_n$ generated by $S_dB_n$. Thus, we have the following graduation on the algebra
$$SH_n=\bigoplus_{d=0}^{\infty} S_dH_n.$$
Observe that $S_0H_n$ is the Iwahori--Hecke algebra $H_n$.\smallbreak

% \sout{ There is a natural representation $\pi:SB_n\rightarrow SH_n$ given by the mapping $\sigma_i\mapsto g_i$ and $\tau_i\mapsto p_i$. This can be restricted to the subset $S_dB_n$ for $d\geq0$, which will be denoted by $\pi_d:S_dB_n\rightarrow S_dH_n$. For $\alpha\in S_dB_n$, by simplicity we will denote $\pi_d(\alpha)$ by $\alpha$. }

% Let $\mathsf{C}_{d,n}$ be the subset of $S_dH_n$ given by the elements of the form $\p{i_1}\cdots \p{i_d}\alpha$ with $\alpha\in H_n$.% No será H_n más correcto.
% The set $\mathsf{C}_{d,n}$ spans the subspace $S_dH_n$ \cite[Proposition 3.1]{pare}. However, this is not a basis \cite[Lemma 3.2]{pare}. 

    Let $\A$ be  a variable and $d\geq 0$. A \textit{d-Markov trace} on $\{S_dH_n\}_{n=1}^{\infty}$ is a family of $\mathbb{C}(u)$--linear maps $\tr^d:=\{\tr_n^d\}_{n=0}^{\infty}$, where $\tr_n^d:S_dH_n\rightarrow \mathbb{C}(u,\A)$, satisfying:
    \begin{enumerate}[(i)]
        \item $\tr_n^d(\alpha\beta)=\tr_n^d(\beta\alpha)$ for all $\alpha \in S_kB_n$ and $\beta \in S_{d-k}B_n$,
        \item $tr^d_{n+1}(\alpha)=\tr_n^d(\alpha)$ for all $\alpha \in S_dB_n$,
        \item $\tr_{n+1}^d(\alpha g_n)=\A\tr_n^d(\alpha)$ for all $\alpha \in S_dB_n$,
    \end{enumerate}
    for $n\geq 1$. Thus, a \textit{Markov trace} on $\{SH_n\}_{n=1}^{\infty}$ is a family of $\mathbb{C}(u)$--linear maps $\tr:=\{\tr_n\}_{n=0}^{\infty}$, such that $\{\tr_n^d:=\tr_n|_{S_dH_n}\}_{n=1}^{\infty}$ is a $d$-Markov trace for all $d\geq 0$. 

For $d\geq 0$, the family of $d$--Markov traces $\{\tr^{(d,0)},\tr^{(d,1)},\ldots,\tr^{(d,d)}\}$ can be constructed, for details see  \cite[Section 4]{pare}. This family is a basis for $\T_d$, the $\mathbb{C}(u)$--vector space of all $d$-Markov traces on $\{S_dH_n\}_{n=1}^{\infty}$, see \cite[Theorem 4.7]{pare}. Let $X,Y$ new variables that commute each other, the so called universal trace $\widehat{\tr}:=\{\widehat{\tr}_n\}_{n=0}^{\infty}$ is defined as follows: 
$$\widehat{\tr}_n(\alpha):=\sum_{k=0}^{d}\frac{\sqrt{u}^k}{(d-k)!k!}X^k Y^{d-k}\tr_n^{(d,k)}(\alpha),$$
for $d\geq 0$ and $\alpha\in S_dH_n$. 

Thus, the Paris--Rabenda invariant $\widehat{I}$ is derived by using Jones method. More precisely, let $\pi:SB_n\to SH_n$ be the natural representation, and $\varepsilon:SB_n\to \mathbb{Z}$ the monoid homomorphism given by $\sigma_i^{\pm 1}\mapsto \pm 1$ and $\tau_i\mapsto 0$. Then, for a singular link $L$, the invariant $\widehat{I}$ is given by
\begin{equation}\label{invpr}
    \widehat{I}(L):=\left(\frac{1-u\lambda}{u-1} \right)^{n-1}(\sqrt{\lambda})^{\varepsilon(\alpha)-n+1}\widehat{\tr}_n(\pi(\alpha)),
\end{equation}
where $\alpha \in SB_n$ such that $\widehat{\alpha}=L$, and $\lambda:=\frac{\A-u+1}{u\A}$.

\begin{remark}
  It is well known, that singular links can be also regarded as a four-valent graph with rigid vertices embedded in $\RR^3$. In \cite{KauVog92}, Kauffman and Vogel give a method to extend any regular isotopy invariant of unoriented (oriented) classical links  to a regular isotopy invariant of unoriented (oriented) rigid vertex graphs in $\RR^3$ by imposing certain skein rule involving the rigid vertex, for details see \cite[Section 1]{KauVog92}. They apply this method to an state model for the HOMFLY--PT polynomial obtaining an invariant of four-valent graph that satisfies the HOMFLY--PT skein relation togheter with the skein rule imposed to the rigid vertex. Thus, the Kauffman Vogel invariant can be obtained as a specialization of $\widehat{I}$, for details see \cite[Lemma 5.6]{pare}.
\end{remark}

\subsubsection{The Aicardi--Juyumaya invariants}
In \cite{aiju2020}, four invariants for singular links are defined. All of them are derived by using Jones method on the Markov trace on the bt--algebra $\E_n(u)$, though using different homomorphism from the monoid $SB_n$ to $\E_n(u)$ in each case. More precisely, let $x,y$ be commuting variables, we then have that the mapping $\sigma_i\mapsto wT_i$ and $\tau_i\mapsto x+wyT_i$ (resp. $\sigma_i\mapsto wT_i$ ; $\tau_i\mapsto E_ix+wyE_iT_i$) induces an homormophism $\psi_{w,x,y}:SB_n\rightarrow\E_n(u)$ (resp. $\phi_{w,x,y}:SB_n\rightarrow\E_n(u)$). Additionally, the mappings obtained by replacing $T_i$ by $V_i$ also induce two homomorphisms from $SB_n$ to $\E_n(\sqrt{u})$, denoted by $\psi'_{w,x,y}$ and $\phi'_{w,x,y}$, respectively. See \cite[Proposition 6]{aiju2020}.\smallbreak
For any singular link $L$, the following functions are defined
\begin{align}
    \Psi_{x,y}(L)&:=\left(\frac{1}{\A \sqrt{\mathsf{c}}}\right)^{n-1}(\Tr{n}\circ\psi_{w,x,y})(\alpha) \in \KK(\bb,w,x,y),\label{invai1}\\
    \Phi_{x,y}(L)&:=\left(\frac{1}{\A \sqrt{\mathsf{c}}}\right)^{n-1}(\Tr{n}\circ\phi_{w,x,y})(\alpha) \in \KK(\bb,w,x,y),\label{invai2}\\
    \Psi'_{x,y}(L)&:=\left(\frac{1}{\A \sqrt{\mathsf{d}}}\right)^{n-1}(\Tr{n}\circ\psi'_{w,x,y})(\alpha) \in \KK(\bb,w,x,y),\label{invai1'}\\
    \Phi'_{x,y}(L)&:=\left(\frac{1}{\A \sqrt{\mathsf{d}}}\right)^{n-1}(\Tr{n}\circ\phi'_{w,x,y})(\alpha) \in \KK(\bb,w,x,y),\label{invai2'}
\end{align}

  % \noindent where $\alpha \in SB_n$ and $L=\widehat{\alpha}$. We have that these function are singular link invariants by setting
  % $$\mathsf{c}:=\frac{\A+(1-u)\bb}{\A u}=w^2.$$
  % Analogously, the authors also define
  % \begin{equation}\label{invai1'}
  %     \Psi'_{x,y}(L):=\left(\frac{1}{\A \sqrt{\mathsf{c}}}\right)^{n-1}(\Tr{n}\circ\psi'_{w,x,y})(\alpha) \in \KK(\bb,w,x,y)
  % \end{equation}
  % and 
  % \begin{equation}\label{invai2'}
  %     \Phi'_{x,y}(L):=\left(\frac{1}{\A \sqrt{\mathsf{c}}}\right)^{n-1}(\Tr{n}\circ\phi'_{w,x,y})(\alpha) \in \KK(\bb,w,x,y),
  % \end{equation}
  
  \noindent where $\alpha \in SB_n$ and $L=\widehat{\alpha}$. Then, the functions $\Psi_{x,y}$ and $\Phi_{x,y}$ (resp. $\Psi'_{x,y}$ and $\Phi'_{x,y}$) are singular links invariant by setting 
$$ \mathsf{c}:=\frac{\A+(1-u)\bb}{\A u}\quad  \left(\text{resp.}\ \mathsf{d}:=\frac{\A+(1-u)\bb}{\A }\right).$$
  For details see \cite[Theorem 7]{aiju2020} and \cite[Theorem 8]{aiju2020}, respectively. Moreover, these invariants can be extended to invariants of $cts$--links, which are denoted again by $\Psi_{x,y}, \Phi_{x,y}, \Psi'_{x,y}$ and $\Phi'_{x,y}$.
  
    \begin{remark}
      Observe that for a classical link $L$ we have that 
      $\Psi_{x,y}=\Phi_{x,y}=\Delta$ and $ \Psi'_{x,y}=\Phi'_{x,y}=\Theta,$ where $\Delta$ and $\Theta$ are the classical links invariants derived from the algebras $\mathcal{E}_n(u)$ and $\mathcal{E}_n(\sqrt{u})$, respectively.
    \end{remark}
  
  \begin{remark}
  There exist pairs of singular links that are distinguished by $\Psi_{x,y}, \Phi_{x,y}, \Psi'_{x,y}$ and $\Phi'_{x,y}$  but not distinguished by $\widehat{I}$. Likewise, there exist pairs of singular links that are distinguished by $\Psi_{x,y}$ and $\Phi_{x,y}$ but not by $\Psi'_{x,y}$ and $\Phi'_{x,y}$. Finally, we also have that $\Psi_{x,y}$ is more powerful than $\Phi_{x,y}$ on cts--links. For details see \cite[Section 8]{aiju2020}.
  \end{remark}

\section{New invariants for singular links and cts--links}\label{newinv}
In this section, we introduce new invariants for singular links and cts--links following the approach in \cite{aiju2020}, but this time using the two parameter bt--algebra. Thus, the four invariants $\Psi_{x,y}, \Phi_{x.y}, \Psi'_{xy}$ and $\Phi'_{x,y}$ given in \cite[Section 5]{aiju2018} can be recovered by specializing the invariants defined in this section.\smallbreak
Let $w,x,y$ and $z$ be variables that commute each other and with $\A$ and $\bb$.  We set $\mathbb{L}:=\KK(w,x,y,z)$; from now on we work in the algebra $\EE{u}{v}\otimes_{\KK} \mathbb{L}$, which will be denoted again by $\EE{u}{v}$ for simplicity. We also denote by $\Tr{}$ the Markov trace defined in Section \ref{bttwo} extended to the $\LL$-linear map from  $\EE{u}{v}\otimes_{\KK} \mathbb{L}$ into $\LL(\A,\bb)$.
\begin{proposition}\label{newrep}
\begin{enumerate}[(i)]
The following statements hold 
    \item The mappings $\sigma_i\mapsto wR_i$ and $\tau_i\mapsto x+ywR_i+zw^{-1}R_i^{-1}$ induce a monoid homomorphism  $\rho_{w,x,y,z}:SB_n\rightarrow \E_n(u,v)$.
     \item The mappings $\sigma_i\mapsto wR_i$ and $\tau_i\mapsto 
     E_i(x+ywR_i)$ induce a monoid homomorphism  $\varrho_{w,x,y}:SB_n\rightarrow \E_n(u,v)$.
\end{enumerate}
\end{proposition}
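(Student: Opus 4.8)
The plan is to use the monoid presentation of $SB_n$: a map defined on the generators $\sigma_i^{\pm 1},\tau_i$ extends to a monoid homomorphism precisely when the proposed images satisfy every defining relation, namely (\ref{B1})--(\ref{B2}) and (\ref{SB1})--(\ref{SB3}). Write $T_i:=wR_i$ for the common image of $\sigma_i$ in both cases, and let $\Xi_k$ denote the image of $\tau_k$. First I would record that $T_i$ is invertible with $T_i^{-1}=w^{-1}R_i^{-1}$, using the explicit inverse (\ref{invbt}); this makes the image of $\sigma_i^{-1}$ well defined and forces $\sigma_i\sigma_i^{-1}=\sigma_i^{-1}\sigma_i=1$ to hold automatically. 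Since $w$ is central, the braid relations (\ref{B1})--(\ref{B2}) for the $T_i$ are immediate from (\ref{bt7})--(\ref{bt8}).

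Next I would dispose of all relations that involve only distant or equal indices. In case (i), rewriting $R_i^{-1}$ via (\ref{invbt}) shows that $\Xi_i=x+ywR_i+zw^{-1}R_i^{-1}$ lies in the subalgebra generated by $R_i$ and $E_i$; likewise $\Xi_i=xE_i+ywE_iR_i$ lies in that subalgebra in case (ii). By (\ref{bt1}), (\ref{bt3}) and (\ref{bt7}), the subalgebras generated by $\{R_i,E_i\}$ and $\{R_j,E_j\}$ commute whenever $|i-j|\geq 2$, which yields the distant cases of (\ref{SB1}) and all of (\ref{SB3}) at once. The same relation (\ref{bt3}) in the case $i=j$ gives $E_iR_i=R_iE_i$, whence both $\Xi_i$'s commute with $T_i$, settling the remaining instance of (\ref{SB1}).

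The heart of the proof is the mixed braid relation (\ref{SB2}), $\sigma_i\sigma_j\tau_i=\tau_j\sigma_i\sigma_j$ for $|i-j|=1$. After cancelling the central factor $w^2$, it reduces to the identity $R_iR_j\,\Xi_i=\Xi_j\,R_iR_j$. In case (i) I would expand $\Xi_i$ termwise and match the three summands separately: the constant term is trivial, the $R_i$--term uses $R_iR_jR_i=R_jR_iR_j$ from (\ref{bt8}), and the $R_i^{-1}$--term uses the conjugation identity $R_iR_jR_i^{-1}=R_j^{-1}R_iR_j$, which I would derive directly from (\ref{bt8}) by rearranging $R_iR_jR_i=R_jR_iR_j$. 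In case (ii) the two required inputs are (\ref{bt5}) read with $i$ and $j$ interchanged, that is $R_iR_jE_i=E_jR_iR_j$, together with the braid relation (\ref{bt8}); combining them gives $R_iR_jE_iR_i=E_jR_jR_iR_j$, which is exactly the nonconstant summand.

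The only genuinely delicate points, and the ones I would check most carefully, are the treatment of the $R_i^{-1}$--term in case (i) --- ensuring that the conjugation identity is the precise consequence of (\ref{bt8}) rather than a near-miss --- and the correct index bookkeeping when applying (\ref{bt5}), since that relation is not symmetric in $i$ and $j$ as stated. Everything else is routine scalar manipulation using the centrality of $w,x,y,z$, so I expect no further obstacle.
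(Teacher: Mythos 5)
Your proposal is correct and follows essentially the same route as the paper: both verify the defining relations of $SB_n$ on the proposed images, with the only substantive computation being the mixed relation $\sigma_i\sigma_j\tau_i=\tau_j\sigma_i\sigma_j$, handled via (\ref{bt8}) (including the conjugation identity $R_iR_jR_i^{-1}=R_j^{-1}R_iR_j$, exactly the step the paper performs in Eq.~(\ref{rephold})) and, for case (ii), via (\ref{bt5}) with the $z=0$ specialization. You are merely more explicit than the paper about the routine parts (invertibility of $wR_i$ and the commutation of the subalgebras $\langle R_i,E_i\rangle$ and $\langle R_j,E_j\rangle$ for $|i-j|\geq 2$), which the paper dismisses as "not difficult to check."
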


   \begin{proof}
  The proof reduces to verify that the images of $\sigma_i$ and $\tau_i$ (in each case) satisfy the relations of $SB_n$. It is not difficult to check that (\ref{B1})--(\ref{SB1}) and (\ref{SB3}) are satisfied in both cases. For (\ref{SB2}), we have
  \begin{equation}\label{rephold}
      \begin{array}{rcl}
      w^2R_iR_j(x+ywR_i+zw^{-1}R_i^{-1}) &=&  w^2xR_iR_j+w^3yR_iR_jR_i+wzR_iR_jR_i^{-1}\\
      &\stackrel{(\ref{bt8})}{=}&w^2xR_iR_j+w^3yR_jR_iR_j+wzR_j^{-1}R_iR_j\\
      &=&(x+ywR_j+zw^{-1}R_j^{-1})w^2R_iR_j,
  \end{array}
\end{equation}
  \noindent and therefore (i) holds. Finally, (ii) follows by Eq. (\ref{rephold}) and (\ref{bt5}) and setting $z=0$.
   \end{proof}
   
From now on, to simplify notation we will denote the representations $\rho_{w,x,y,z}$ and $\varrho_{w,x,y}$ by $\rho$ and $\varrho$, respectively.
We use these homomorphism to derive invariants for singular links. Up to normalization, we want that $\Tr{n}\circ\rho$ and $\Tr{n}\circ\varrho$ define singular link invariants,  hence we need that they respect the Markov move (MS\ref{MS2}). Then, we set
    \begin{equation}\label{rescale}
      w^2=\frac{\A+(1-v)\bb}{\A u}.
    \end{equation}
Thus, for a singular link $L$ we define:
    \begin{equation}\label{invsin1}
        \Upsilon_{x,y,z}(L):=\left(\frac{1}{\A w}\right)^{n-1}(\Tr{n}\circ \rho)(\alpha) \in \LL(\A,\bb)
    \end{equation}
and
    \begin{equation}\label{invsin2}
        \Upsilon'_{x,y}(L):=\left(\frac{1}{\A w}\right)^{n-1}(\Tr{n}\circ\varrho)(\alpha) \in \LL(\A,\bb),
    \end{equation}
where $\alpha \in SB_n$ and $L=\widehat{\alpha}$.

\begin{remark}
    Observe that Eq.(\ref{invbt}) implies that the addition of an extra  parameter $z$ in the definition of the representation $\varrho$ is superflous for the new invariant $\Upsilon'_{x,y}$. Namely, we have that 
    $$E_iR_{i}^{-1}=E_iR_i +( 1-v)u^{-1}E_i + (u^{-1} - 1)E_iR_i=( 1-v)u^{-1}E_i + u^{-1}E_iR_i.$$
    However, using an extra parameter in representation $\rho$ should be meaningful for the invariant $\Upsilon_{x,y,z}$, since $R_i^{-1}$ can't be expressed as a linear combination of $R_i$ and $1$.
\end{remark}

\begin{theorem}\label{newinvariants}
The functions  $\Upsilon_{x,y,z}$ and $\Upsilon'_{x,y}$ are isotopy invariants of singular links.
\end{theorem}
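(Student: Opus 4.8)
The plan is to appeal to the Alexander and Markov theorems for singular links recalled in Subsection~2.2, which reduce the claim to showing that the normalized quantities are unchanged under the two moves (MS\ref{MS1}) and (MS\ref{MS2}). A useful preliminary observation is that $\rho$ and $\varrho$ send $\sigma_i$ to the same element $wR_i$, and that the stabilization move (MS\ref{MS2}) involves only $\sigma_n^{\pm1}$, never $\tau_n$; since $\rho$ and $\varrho$ differ only in the image of the $\tau_i$, the verification of (MS\ref{MS2}) is literally identical for both functions. I would therefore carry out the computation once, for a homomorphism sending $\sigma_i\mapsto wR_i$, and note that it covers both $\Upsilon_{x,y,z}$ and $\Upsilon'_{x,y}$ simultaneously.

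Invariance under (MS\ref{MS1}) is immediate. Since $\rho$ is a monoid homomorphism (Proposition~\ref{newrep}) and the factor $(1/\A w)^{n-1}$ depends only on the strand number $n$, which is fixed under conjugation, property~(i) of the Markov trace gives $\Tr{n}(\rho(\alpha\beta))=\Tr{n}(\rho(\alpha)\rho(\beta))=\Tr{n}(\rho(\beta)\rho(\alpha))=\Tr{n}(\rho(\beta\alpha))$, so the two sides of (MS\ref{MS1}) produce equal values.

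The content lies in (MS\ref{MS2}). Set $X:=\rho(\beta)$, an element of $\E_n(u,v)$ involving neither $R_n$ nor $E_n$, so that the trace rules apply cleanly. For positive stabilization, $\rho(\beta\sigma_n)=X\,wR_n$, and rule~(ii) yields $\Tr{n+1}(X\,wR_n)=\A w\,\Tr{n}(X)$; multiplying by the new normalization $(1/\A w)^{n}$ recovers $(1/\A w)^{n-1}\Tr{n}(X)$, as required. For negative stabilization I would expand $R_n^{-1}$ by the invertibility formula~(\ref{invbt}) and apply rules~(ii) and~(iii) termwise, obtaining
\[
\Tr{n+1}(X R_n^{-1})=\big[\A+(1-v)u^{-1}\bb+(u^{-1}-1)\A\big]\Tr{n}(X)=u^{-1}\big[\A+(1-v)\bb\big]\Tr{n}(X).
\]
The role of the scaling relation~(\ref{rescale}) is precisely to force the bracket to equal $\A w^2$, so that $\Tr{n+1}(X\,w^{-1}R_n^{-1})=\A w\,\Tr{n}(X)$ matches the positive case and is again absorbed by the normalization.

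The hard part---indeed the only step that is not purely formal---is this last identity: it both pins down the admissible value of $w$ and requires checking that the three contributions arising from (\ref{invbt}) collapse, via rules~(ii) and~(iii), into the single factor $u^{-1}[\A+(1-v)\bb]=\A w^2$. Once this is verified, both (MS\ref{MS1}) and (MS\ref{MS2}) hold, and the theorem follows for $\Upsilon_{x,y,z}$ and $\Upsilon'_{x,y}$ at once.
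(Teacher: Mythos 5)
Your proposal is correct and takes essentially the same route as the paper's proof: conjugation invariance from trace rule (i), and stabilization handled by expanding $R_n^{-1}$ via the inversion formula (\ref{invbt}), applying trace rules (ii) and (iii) termwise, and invoking the scaling condition (\ref{rescale}) so that both $\Tr{n+1}(\rho(\beta)\,wR_n)$ and $\Tr{n+1}(\rho(\beta)\,w^{-1}R_n^{-1})$ equal $\A w\,\Tr{n}(\rho(\beta))$. Your observation that the computation covers $\Upsilon'_{x,y}$ verbatim, since $\rho$ and $\varrho$ agree on the $\sigma_i$ and the stabilization moves involve only $\sigma_n^{\pm 1}$, simply makes explicit what the paper dismisses as ``analogous''.
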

  \begin{proof}
  We have to verify that the functions  $\Upsilon_{x,y,z}$ and  $\Upsilon'_{x,y}$ respect the Markov moves (MS\ref{MS1}) and (MS\ref{MS2}). Firstly, note that both functions respect move (MS\ref{MS1}) by property (i) of the trace $\Tr{}$. We only verify that  $\Upsilon_{x,y,z}$ respects the move (MS\ref{MS2}) since the other case follows analogously. Let be $\alpha \in SB_n$, we then have
 \begin{align*}
    (\Tr{n+1}\circ\rho)(\alpha \sigma_n^{-1}) &= w^{-1}\Tr{n+1}(\rho(\alpha) R_n^{-1}) \\
     &=w^{-1} \Tr{n+1}(\rho(\alpha)(R_n +( 1-v)u^{-1}E_n + (u^{-1} - 1)E_nR_n)) \\
     &=(\Tr{n}\circ\rho)(\alpha)w^{-1}(\A +( 1-v)u^{-1}\bb + (u^{-1} - 1)\A) \\
     &=(\Tr{n}\circ\rho)(\alpha)w^{-1} \left(\frac{\A +( 1-v)\bb}{u} \right) \stackrel{(\ref{rescale})}{=} \A w(\Tr{n}\circ\rho)(\alpha).
\end{align*}

  Thus, we obtain 
  $$\Upsilon_{x,y,z}(\widehat{\alpha\sigma_n^{-1}})=\left(\frac{1}{\A w}\right)^{n}\A w(\Tr{n}\circ \rho)(\alpha)=\Upsilon_{x,y,z}(\widehat{\alpha}).$$
    We can prove $\Upsilon_{x,y,z}(\widehat{\alpha\sigma_n})=\Upsilon_{x,y,z}(\widehat{\alpha})$ in a similar way.
  \end{proof}

\begin{remark}
    For $L$ a classical link, we have that 
     $$\Upsilon_{x,y,z}(L)=\Upsilon'_{x,y}(L)=\Upsilon(L),$$
     where $\Upsilon$ is the classical link invariant derived from the two parameter algebra and its Markov trace, see Remark \ref{rem:compinv}. Thus, by setting $z=0$ and $v=u$ (resp. $u=1$ and $v=\sqrt{u}-\sqrt{u}^{-1}+1$), we recover the invariant $\Psi_{x,y}$ (resp. $\Psi_{x,y}'$ ) defined on \cite[Section 5]{aiju2018}.
\end{remark}

\begin{proposition}{\label{prop:ext}}
     The representations $\rho$ and $\varrho$ can be naturally extended to the tied singular braid monoid $TSB_n$ by mapping $e_i\mapsto E_i$. This extensions will be denoted by $\widetilde{\rho}$ and $\widetilde{\varrho}$, respectively.
\end{proposition}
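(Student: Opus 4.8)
The plan is to check that $\widetilde{\rho}$ and $\widetilde{\varrho}$ respect every defining relation of $TSB_n$. On the generators $\sigma_i,\tau_i$ these maps coincide with $\rho$ and $\varrho$, so Proposition~\ref{newrep} already secures (\ref{B1})--(\ref{B2}) and (\ref{SB1})--(\ref{SB3}); the entire task is therefore to verify the relations that involve the tied generators $e_i\mapsto E_i$, that is, the tied-braid relations (\ref{TS1})--(\ref{TS4}) and the tied-singular relations (\ref{TSB1})--(\ref{TSB3}). For (\ref{TS1})--(\ref{TS4}) I would invoke the presentation of $\EE{u}{v}$ as a quotient of $\KK(TB_n)$ noted after Definition~\ref{def:E(u,v)}: these relations hold verbatim for $R_i$ and $E_i$, and since each is homogeneous of the same total degree in the braid letters on its two sides, the scalar introduced by $\sigma_i\mapsto wR_i$ cancels and they survive unchanged.

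The substance lies in (\ref{TSB1})--(\ref{TSB3}). I would organize this through the semidirect-product presentation $TSB_n=\PP{n}\rtimes SB_n$, defining the extension on the nose by $(I,\alpha)\mapsto E_I\,\rho(\alpha)$, where $E_I$ is the product of idempotents attached to the blocks of $I$. The sole point to check is then the transport identity $\rho(\alpha)\,E_{\mu_{j,j+1}}=E_{{}^{s_i}\mu_{j,j+1}}\,\rho(\alpha)$ for each generator $\alpha$ (with $s_i$ its underlying permutation); once it holds on generators it propagates to all $\alpha$ by composition, and it is equivalent to the whole list (\ref{TS1})--(\ref{TS4}), (\ref{TSB1})--(\ref{TSB3}). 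For $\alpha=\sigma_i^{\pm1}$ the identity is precisely (\ref{TS1})--(\ref{TS4}), so the new input is $\alpha=\tau_i$. Writing $\rho(\tau_i)$ as a $\KK$-combination of $1$, $\rho(\sigma_i)$ and $\rho(\sigma_i^{-1})$ (and $\varrho(\tau_i)$ as the same up to a global factor $E_i$), the two braided summands transport with the permutation $s_i$ exactly as (\ref{bt5})--(\ref{bt6}) dictate — here I would also record the auxiliary identity $R_iE_jR_i^{-1}=R_j^{-1}E_iR_j$, read off from (\ref{bt5}) — so they carry $E_j$ to $E_{{}^{s_i}\{j,j+1\}}$ as required. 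The non-adjacent relation (\ref{TSB1}) drops out immediately from (\ref{bt3}), since there $s_i$ fixes $\{j,j+1\}$.

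I expect the main obstacle to be the constant summand in the adjacent case $|i-j|=1$, which is the heart of (\ref{TSB2}) and of the conjugation relation $\tau_ie_j=\sigma_ie_j\sigma_i^{-1}\tau_i$ in (\ref{TSB3}). The difficulty is structural: the term $x\cdot 1$ of $\rho(\tau_i)$ carries the trivial permutation, so by itself it transports $E_j$ to $E_j$, whereas the relation demands the shifted tie $E_{{}^{s_i}\{j,j+1\}}$, and for adjacent indices these idempotents genuinely differ. For $\widetilde{\varrho}$ this is resolved cleanly, because the global factor $E_i$ turns the constant term into $E_iE_j$, and the equalities $E_iE_j=E_{\{i,i+1,i+2\}}=E_{{}^{s_i}\{j,j+1\}}E_i$ together with (\ref{bt6}) make it compatible; the extra parameter $z$ plays no role here, in line with the earlier observation that it is superfluous for $\varrho$. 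For $\widetilde{\rho}$ the same reconciliation of the constant term with the tie transport, carried out termwise in $\{1,R_i,R_i^{-1}\}$ and leaning on the quadratic relation (\ref{bt9}) and the inverse formula (\ref{invbt}), is the step I would budget the most care for and regard as the crux of the argument.
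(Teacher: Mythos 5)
Your framework is sound where it is carried out: the reduction to the tie relations, the organization of (\ref{TSB1})--(\ref{TSB3}) through the semidirect product $TSB_n=\PP{n}\rtimes SB_n$ as a transport condition on the idempotents, and the resolution of the constant term for $\widetilde{\varrho}$ via $E_iE_j=(R_iE_jR_i^{-1})E_i$ together with (\ref{bt6}) are all correct, and considerably more explicit than the paper's own proof, which consists of the single sentence that the claim ``follows by Proposition \ref{newrep} and the defining relations of the algebra $\EE{u}{v}$.'' For $\widetilde{\varrho}$ your argument is essentially complete.

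The genuine gap is exactly the step you defer for $\widetilde{\rho}$, and it is not a step that needs more care: it fails, and neither (\ref{bt9}) nor (\ref{invbt}) can repair it. Since $w,x,y,z$ are independent indeterminates, an identity in $\EE{u}{v}\otimes_{\KK}\LL$ must hold coefficientwise in the monomials $x^ay^bz^c$, and the quadratic and inverse relations only rewrite the coefficients of $ywR_i$ and $zw^{-1}R_i^{-1}$ --- they never mix them with the coefficient of $x$. Now apply $\widetilde{\rho}$ to the relation $e_ie_j\tau_i=e_j\tau_ie_j$ of (\ref{TSB3}) with $|i-j|=1$: the left side is $xE_iE_j+ywE_iE_jR_i+zw^{-1}E_iE_jR_i^{-1}$ and the right side is $xE_j+ywE_jR_iE_j+zw^{-1}E_jR_i^{-1}E_j$; the $y$- and $z$-coefficients agree by (\ref{bt6}) and its consequences, but the $x$-coefficients demand $E_iE_j=E_j$, which is false --- e.g.\ for $n=3$ one has $\Tr{3}(E_1E_2)=\bb\,\Tr{2}(E_1)=\bb^2\neq\bb=\Tr{3}(E_2)$. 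Likewise the $x$-coefficient of $e_i\sigma_j\tau_i=\sigma_j\tau_ie_j$ in (\ref{TSB2}) forces $E_iR_j=R_jE_j$, i.e.\ $E_i=R_jE_jR_j^{-1}$, again false, and the conjugation relation $\tau_ie_j=\sigma_ie_j\sigma_i^{-1}\tau_i$ forces $E_j=R_iE_jR_i^{-1}$. So with $\tau_i\mapsto x+ywR_i+zw^{-1}R_i^{-1}$ and $e_i\mapsto E_i$, the assignment does not respect (\ref{TSB2})--(\ref{TSB3}) for generic $x\neq 0$: the ``structural difficulty'' you isolated is an actual obstruction, and your plan cannot be completed for $\widetilde{\rho}$ as stated. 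You should be aware that the paper's one-line proof does not engage with this point at all; consistent with that, the paper only ever applies $\widetilde{\rho}$ to words $\alpha\tau_i\beta$ with $\alpha,\beta\in SB_n$ (proof of Theorem \ref{skeinth}), tracking the ties combinatorially on the cts--link side via $I_{i,j}$, $\tilde{I}_{i,j}$, $I^{*}_{i,j}$ rather than through multiplicativity of $\widetilde{\rho}$ on all of $TSB_n$. Only $\widetilde{\varrho}$, where every summand of the image of $\tau_i$ carries the factor $E_i$, extends in the sense the proposition asserts --- so in locating and refusing to wave away this step, your attempt has in fact exposed a defect that the paper's proof glosses over.
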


\begin{proof}
    It follows by Proposition \ref{newrep} and the defining relations of the algebra $\EE{u}{v}$
\end{proof}
Thus, for $(L,I)$ a cts--link, we can define the functions

\begin{equation}\label{eq:invcts1}
      \widetilde{\Upsilon}_{x,y,z}(L,I):=\left(\frac{1}{\A w}\right)^{n-1}(\Tr{n}\circ\widetilde{\rho})(\alpha) \in \LL(\bb)
  \end{equation}

 and  

 \begin{equation}\label{eq:invcts2}
      \widetilde{\Upsilon'}_{x,y}(L,I):=\left(\frac{1}{\A w}\right)^{n-1}(\Tr{n}\circ\widetilde{\varrho})(\alpha) \in \LL(\bb).
  \end{equation}

\begin{proposition}
 The functions  $\widetilde{\Upsilon}_{x,y,z}$ and $\widetilde{\Upsilon'}_{x,y}$ are isotopy invariants of cts-links.   
\end{proposition}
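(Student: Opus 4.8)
The plan is to follow the strategy of Theorem~\ref{newinvariants}, verifying that $\widetilde{\Upsilon}_{x,y,z}$ and $\widetilde{\Upsilon'}_{x,y}$ are unchanged under the Markov moves for cts--links provided by \cite[Theorem 11]{aiju2018}. By that theorem, two tied singular braids have $ts$--isotopic closures exactly when they are connected by a finite sequence of the conjugation move $\alpha\beta \rightleftharpoons \beta\alpha$ in $TSB_n$, the stabilizations $\beta \rightleftharpoons \beta\sigma_n^{\pm 1}$, and the tied stabilizations $\beta \rightleftharpoons \beta e_n\sigma_n^{\pm 1}$, where $\beta\in TSB_n$ is viewed inside $TSB_{n+1}$. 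Since $\widetilde{\rho}$ and $\widetilde{\varrho}$ are monoid homomorphisms (Proposition~\ref{prop:ext}) that agree on the only generators entering these moves, namely $\sigma_n\mapsto wR_n$ and $e_n\mapsto E_n$, I would carry out the verification for $\widetilde{\rho}$ and simply remark that the argument for $\widetilde{\varrho}$ is word for word the same. Throughout I set $X:=\widetilde{\rho}(\beta)$.

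For the conjugation move, the homomorphism property and rule (i) of $\Tr{}$ give $\Tr{n}(\widetilde{\rho}(\alpha\beta))=\Tr{n}(\widetilde{\rho}(\alpha)\widetilde{\rho}(\beta))=\Tr{n}(\widetilde{\rho}(\beta)\widetilde{\rho}(\alpha))=\Tr{n}(\widetilde{\rho}(\beta\alpha))$, and the normalization exponent $n-1$ is unaffected. The ordinary stabilizations $\beta\rightleftharpoons\beta\sigma_n^{\pm1}$ reduce to exactly the computation already performed in Theorem~\ref{newinvariants}, which involves only $X$ and $R_n^{\pm1}$, uses rule (ii) together with the rescaling (\ref{rescale}), and yields the factor $\A w$ compensating the change of the exponent from $n-1$ to $n$.

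The genuinely new point is the tied stabilization. Using $E_n^2=E_n$ and the expansion (\ref{invbt}) of $R_n^{-1}$, I would first record
\[
E_nR_n^{-1}=(1-v)u^{-1}E_n+u^{-1}E_nR_n,
\]
whence rules (ii) and (iii) of $\Tr{}$ give
\[
\Tr{n+1}(XE_nR_n^{-1})=u^{-1}\bigl(\A+(1-v)\bb\bigr)\Tr{n}(X)=\A w^2\,\Tr{n}(X),
\]
the last step by (\ref{rescale}); and directly from rule (ii) one has $\Tr{n+1}(XE_nR_n)=\A\Tr{n}(X)$. Since $\widetilde{\rho}(\beta e_n\sigma_n^{\pm1})=w^{\pm1}XE_nR_n^{\pm1}$, both signs produce the factor $\A w$, which again exactly cancels the shift of the normalization exponent. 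Assembling the three cases shows that $\widetilde{\Upsilon}_{x,y,z}$ and $\widetilde{\Upsilon'}_{x,y}$ are invariant under all the moves, hence well defined on cts--links.

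I expect the main obstacle to be bookkeeping rather than conceptual: one must transcribe the precise form of the Markov moves from \cite[Theorem 11]{aiju2018} and check that the two signs of the tied stabilization balance against the single rescaling (\ref{rescale}); once the identity $\Tr{n+1}(XE_nR_n^{-1})=\A w^2\Tr{n}(X)$ is in hand, the homomorphism property of $\widetilde{\rho},\widetilde{\varrho}$ and the trace rules make every remaining step routine.
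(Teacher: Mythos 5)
Your proof is correct, and the computations check out: $E_nR_n^{-1}=(1-v)u^{-1}E_n+u^{-1}E_nR_n$ together with trace rules (ii)--(iii) and the rescaling \eqref{rescale} does give $\Tr{n+1}(XE_nR_n^{\pm1})\cdot w^{\pm1}=\A w\,\Tr{n}(X)$, exactly compensating the shift in the normalization exponent. It is worth noting, though, that the paper's own proof is a one-line reduction: it simply invokes Theorem~\ref{newinvariants}, leaving implicit that the only genuinely new Markov move for cts--links --- the tied stabilization $\beta\rightleftharpoons\beta e_n\sigma_n^{\pm1}$ from \cite[Theorem 11]{aiju2018} --- is absorbed by the trace rule $\Tr{n+1}(XE_nR_n)=\A\Tr{n}(X)$, which is built into rule (ii) precisely for this purpose. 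Your write-up makes that hidden case explicit and self-contained, which is arguably more rigorous than the paper's citation; you also correctly observe that $\widetilde{\rho}$ and $\widetilde{\varrho}$ agree on $\sigma_n$ and $e_n$, so one verification serves both invariants, matching the economy of the paper's treatment of $\Upsilon'_{x,y}$ in Theorem~\ref{newinvariants}. In short: same underlying strategy (invariance under the monoid Markov moves via the trace rules), but your version fills in the tied-stabilization bookkeeping that the paper delegates to the earlier theorem.
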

\begin{proof}
    The result follows by Theorem \ref{newinvariants}.
    \end{proof}
\begin{remark}\label{singversion}
For a $k$-component singular link $L$ we have that 
    $$\widetilde{\Upsilon}_{x,y,z}(L,\mathbf{1}_{k})=\Upsilon_{x,y,z}(L)\quad \text{and} \quad \widetilde{\Upsilon'}_{x,y}(L,\mathbf{1}_{k})=\Upsilon'_{x,y}(L)$$
    where $\mathbf{1}_{k}:=(\{1\},\{2\},\dots,\{k\})$ the partition in $\PP{k}$ with only single blocks. 
\end{remark}

\begin{remark}\label{tiedtoctlink}
   In \cite[Theorem 3]{aiju2020}, the authors give skein relations for $\widetilde{\Upsilon}$, the invariant of tied links that extends $\Upsilon$, see \cite[Section 4.2]{aiju2020}. These are given using tied links notation since the concept of cts--link would be 
subsequently introduced in \cite{aiju2018}. More precisely, we know that $\widetilde{\Upsilon}$ satisfies the following skein relation
$$\frac{1}{w}\widetilde{\Upsilon}(L_+)-w\widetilde{\Upsilon}(L_{-})=\frac{v-1}{u}\widetilde{\Upsilon}(L_{\sim})+\frac{1-u^{-1}}{w}\widetilde{\Upsilon} (L_{+,\sim}),$$
where $L_+$, $L_-$, $L_{\sim}$ and $L_{+,\sim}$ are the diagrams of tied links that only differ inside a neighborhood of a crossing as in Figure \ref{fig:L+-o tied}.

\begin{figure}[h!]
    \centering
    \includegraphics[width=0.6\linewidth]{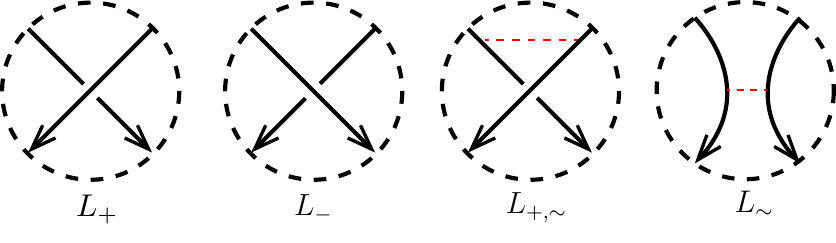}
    \caption{Diagrams of $L_{+},L_-,L_{+,\sim}$ and $L_{\sim}$.}
    \label{fig:L+-o tied}
\end{figure}

Notice that when we apply a smoothing with tie ($L_{\sim}$) to a crossing, we have two possible cases. If the crossing involves two different components, then these components merge in a single one, hence the tie added becomes superfluous. On the other hand, if the crossing involves only one component, it divides into two new components that are tied together. Therefore, in the context of cts--links, in both cases, the link, the number of components and also the partition of its components change. 
\end{remark}

Considering the previous remark, we now introduce notation in order to give skein relations for the invariant $\widetilde{\Upsilon}_{x,y,z}$ and $\widetilde{\Upsilon'}_{x,y}$. For $I\in \PP{k}$ and $1\leq i,j\leq k$.
	\begin{enumerate}[(a)]
		\item We denote by $I^+$ the element in $\PP{k+1}$ obtained from $I$ by adding the single block $\{k+1\}$. Then, $I^+=\iota(I)$, where $\iota:\PP{k} \hookrightarrow \PP{k+1}$ is the natural inclusion.
		\item We denote by $I_{i,j}$ the element in $\PP{k}$ obatined from $I$ by joining the blocks that contain $i$ and $j$, respectively (they are not necessarily disjoint).
		\item For $i<j$, we define $\tilde{I}_{i,j}$ as the element in $\PP{k-1}$ obtained from $I$ by ommiting $j$.

  \item For $i=j$, we define $\tilde{I}_{i,i}$ as the element in $\PP{k+1}$ obtained by joining $k+1$ to the block that contains $i$.
   \item We define $$I^*_{i,j}:=\left\{\begin{array}{cc}
     \tilde{I}_{i,j}  &  i<j\\
      I^+ &   \textit{i=j}.
  \end{array}\right.$$
	\end{enumerate} 

  Let be $L_{\times},L_+,L_-$ and $L_0$ singular links that only differ inside a neighborhood of a crossing as in the following Figure \ref{fig:L+-o}, and let be $C_i$ and $C_j$ the components that interact in such crossing (which are not necessary distinct).

\begin{figure}[h!]
    \centering
    \includegraphics[width=0.6\linewidth]{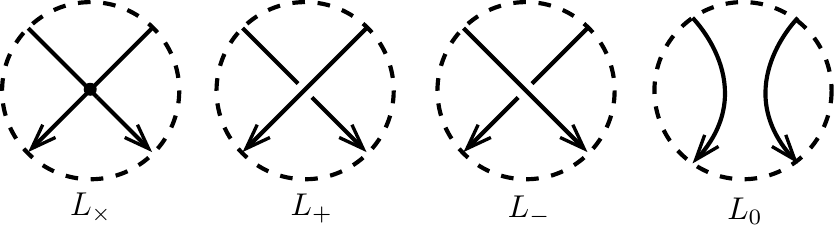}
    \caption{Diagrams of $L_{\times},L_+,L_-$ and $L_0$.}
    \label{fig:L+-o}
\end{figure}

	\begin{theorem}\label{skeinth}
	   The value of the invariant $\widetilde{\Upsilon}_{x,y,z}$ on a $k$ component cts-link $(L,I)$ is uniquely determined by the following rules:\smallbreak

		\begin{enumerate}[(1)]
			\item\label{initial1} $\widetilde{\Upsilon}_{x,y,z}(\bigcirc)=1$, where $\bigcirc$ denotes the unknotted circle.
			\item \label{initial2} $\widetilde{\Upsilon}_{x,y,z}(L\sqcup \bigcirc,I^+)=\frac{1}{aw}\widetilde{\Upsilon}_{x,y,z}(L,I).$
   
			\item\label{skeintie}Skein relation: $$\frac{1}{w}\widetilde{\Upsilon}_{x,y,z}(L_+,I)-w\widetilde{\Upsilon}_{x,y,z}(L_-,I)=\left(\frac{v-1}{u}\right)\widetilde{\Upsilon}_{x,y,z}(L_+,I_{i,j})+\frac{(1-u^{-1})}{w}\widetilde{\Upsilon}_{x,y,z}(L_0,\tilde{I}_{i,j}).$$
            \item\label{desingtie} Desingularization rule:
            $$\widetilde{\Upsilon}_{x,y,z}(L_{\times},I)=x\widetilde{\Upsilon}_{x,y,z}(L_0,I^*_{i,j})+y\widetilde{\Upsilon}_{x,y,z}(L_+,I)+z\widetilde{\Upsilon}_{x,y,z}(L_-,I).$$
		\end{enumerate}

	\end{theorem}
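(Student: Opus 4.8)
The plan is to prove that the four rules uniquely determine $\widetilde{\Upsilon}_{x,y,z}$ and that these rules are in fact satisfied by the invariant constructed in Eq.~(\ref{eq:invcts1}). I would organize the argument around the observation that every cts--link is the closure of a tied singular braid, and that rules (\ref{skeintie}) and (\ref{desingtie}) correspond exactly to the defining relations of the representation $\widetilde{\rho}$ at the level of individual crossings. First I would translate the three types of crossings in Figure~\ref{fig:L+-o} into the algebra: a positive crossing corresponds to $R_i$, a negative crossing to $R_i^{-1}$, a singular crossing to $\tau_i\mapsto x+ywR_i+zw^{-1}R_i^{-1}$, and a smoothing $L_0$ to the identity braid with the appropriate partition adjustment.

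The heart of the verification is to derive each rule from the quadratic relation (\ref{bt9}) together with the normalization and the Markov trace properties. For the desingularization rule (\ref{desingtie}), I would simply expand the image $\widetilde{\rho}(\tau_i)=x+ywR_i+zw^{-1}R_i^{-1}$, noting that the term $x$ (the identity in $\E_n(u,v)$ composed with the partition join coming from the tie, encoded in $I^*_{i,j}$) produces the $L_0$ term, while $ywR_i$ and $zw^{-1}R_i^{-1}$ produce $L_+$ and $L_-$ respectively, each with unchanged partition $I$. The distinction between $i<j$ and $i=j$ in the definition of $I^*_{i,j}$ accounts for whether the smoothing merges two components or splits one, exactly as described in Remark~\ref{tiedtoctlink}. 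For the skein relation (\ref{skeintie}), I would use $R_i-R_i^{-1}$, which by (\ref{invbt}) equals $(v-1)u^{-1}E_iR_i+(1-u^{-1})E_i$ up to the invertibility expansion; after applying the closure and the normalization factor $(\A w)^{-(n-1)}$, the $E_iR_i$ term yields the $L_+$ summand with merged partition $I_{i,j}$ (since $E_i$ records a tie joining the two strands), and the $E_i$ term yields the $L_0$ summand with partition $\tilde{I}_{i,j}$. Rule (\ref{initial1}) is immediate from $\Tr{1}(1)=1$ and rule (\ref{initial2}) follows directly from the trace property $\Tr{n+1}(XE_n)=\bb\Tr{n}(X)$ combined with the normalization, reflecting the disjoint-union-with-unknot move.

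The uniqueness half proceeds by a double induction, first on the number $m$ of singular crossings and then on the number of classical crossings. The desingularization rule (\ref{desingtie}) reduces any diagram with $m$ singular crossings to a $\LL$-linear combination of diagrams with $m-1$ singular crossings, so the base case $m=0$ reduces cts--links to ordinary tied links, where rules (\ref{initial1}), (\ref{initial2}) and (\ref{skeintie}) coincide with the skein description of $\widetilde{\Upsilon}$ recalled in Remark~\ref{tiedtoctlink}. For a fixed $m$, the skein relation (\ref{skeintie}) lets one switch any classical crossing (solving for $\widetilde{\Upsilon}_{x,y,z}(L_+,I)$ in terms of $\widetilde{\Upsilon}_{x,y,z}(L_-,I)$ and strictly simpler diagrams), so by the standard argument—unknotting any classical diagram via crossing changes and removing split unknots via rule (\ref{initial2})—the value on any cts--link is forced.

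The main obstacle will be bookkeeping the partitions correctly in the inductive step, in particular verifying that the partition-valued quantities $I_{i,j}$, $\tilde{I}_{i,j}$ and $I^*_{i,j}$ behave consistently with the semidirect product structure $TSB_n\cong\PP{n}\rtimes SB_n$ when a crossing is smoothed and the strand count changes. One must check that the idempotent $E_i$ in the algebra tracks precisely the merging of blocks recorded by $I_{i,j}$, and that the index-shift involved in passing from $\PP{k}$ to $\PP{k-1}$ or $\PP{k+1}$ matches the natural inclusion $\iota$ and the omission operation defining $\tilde{I}_{i,j}$. Once this dictionary between the algebraic relations of $\E_n(u,v)$ and the combinatorial operations on partitions is established—which is essentially the content of Proposition~\ref{prop:ext} together with the trace rules—the remaining verifications are the routine expansions indicated above.
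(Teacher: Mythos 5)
Your architecture matches the paper's proof where it matters: the paper likewise obtains rules (\ref{initial1})--(\ref{skeintie}) from the tied--link theory (it simply cites \cite[Theorem 3]{aiju2020}, since $\widetilde{\Upsilon}_{x,y,z}$ coincides with $\widetilde{\Upsilon}$ on ct--links), and it proves rule (\ref{desingtie}) exactly as you propose, writing $L_{\times}=\widehat{\alpha\tau_i\beta}$ and expanding $\widetilde{\rho}(\alpha\tau_i\beta)=x\widetilde{\rho}(\alpha\beta)+y\widetilde{\rho}(\alpha\sigma_i\beta)+z\widetilde{\rho}(\alpha\sigma_i^{-1}\beta)$ before applying $\Tr{n}$ and normalizing. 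However, your verification of rule (\ref{desingtie}) contains a genuine error: you describe the $x$-term as ``the identity \emph{composed with the partition join coming from the tie}, encoded in $I^*_{i,j}$.'' There is no tie in that term --- $\rho(\tau_i)=x+ywR_i+zw^{-1}R_i^{-1}$ contains no $E_i$ (that is the representation $\varrho$, which leads to the different desingularization rule of Theorem \ref{skeinth2}). This matters precisely when $i=j$: smoothing a singular self-crossing splits one component into two \emph{untied} components, which is why $I^*_{i,i}$ is defined as $I^+$ (a new singleton block) rather than $\tilde{I}_{i,i}$ (which joins $k+1$ to the block of $i$ and belongs to the \emph{tied} smoothing appearing in rule (\ref{skeintie})). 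Your appeal to Remark \ref{tiedtoctlink} conflates these two smoothings, and followed literally it would make you verify the wrong partition in rule (\ref{desingtie}). (A smaller slip of the same kind: rule (\ref{initial2}) follows from $\Tr{n+1}(X)=\Tr{n}(X)$ for $X\in \EE{u}{v}$, not from the rule $\Tr{n+1}(XE_n)=\bb\Tr{n}(X)$, which instead produces the factor $\bb/(\A w)$ of Remark \ref{uniontied}.)

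Your uniqueness argument also has a gap at the inner step: solving rule (\ref{skeintie}) for $\widetilde{\Upsilon}_{x,y,z}(L_+,I)$ leaves the term $\widetilde{\Upsilon}_{x,y,z}(L_+,I_{i,j})$, whose underlying diagram still contains the crossing, so it is not ``strictly simpler'' in crossing count. One needs a secondary induction on the partition (say on the number of blocks, since $I\preceq I_{i,j}$ strictly unless $i$ and $j$ already lie in one block, in which case $I_{i,j}=I$ and one must solve the resulting linear equation for $\widetilde{\Upsilon}_{x,y,z}(L_+,I)$, checking its coefficient is nonzero). The paper sidesteps all of this by citing \cite[Theorem 3]{aiju2020} for the $m=0$ base case, which is the cleaner route; your outer reduction on the number of singular crossings via rule (\ref{desingtie}) then agrees with the paper. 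One point in your favor: your derivation of rule (\ref{skeintie}) from Eq.\ (\ref{invbt}) pairs $\frac{v-1}{u}$ with the tied smoothing $(L_0,\tilde{I}_{i,j})$ and $\frac{1-u^{-1}}{w}$ with the tied positive crossing $(L_+,I_{i,j})$, which is what the identity $R_i-R_i^{-1}=\frac{v-1}{u}E_i+(1-u^{-1})E_iR_i$ and Remark \ref{tiedtoctlink} actually give; the printed rule (\ref{skeintie}) attaches these coefficients the other way around, which appears to be a typo in the statement rather than an error on your part --- but you should flag the discrepancy explicitly instead of silently proving a different identity.
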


    \begin{proof}
        For ct--links the invariant $\widetilde{\Upsilon}_{x,y,z}$ coincides with the invariant $\widetilde{\Upsilon}$ and we know that this invariant satisfies the rules (\ref{initial1})--(\ref{skeintie}), see \cite[Theorem 3]{aiju2020}. To prove (\ref{desingtie}), suppose $L_{\times}=\widehat{\alpha\tau_i\beta}$ for some $\alpha,\beta \in SB_n$ and $1\leq i\leq n-1$. We then have that $L_0=\widehat{\alpha\tau_i\beta}$, $L_+=\widehat{\alpha\g{i}\beta}$ and $L_-=\widehat{\alpha\g{i}^{-1}\beta}$.  By definition of the homomorphism $\rho$ we have that
        \begin{align*}
            \widetilde{\rho}(\alpha\tau_i\beta)&=x\widetilde{\rho}(\alpha\beta)+y\widetilde{\rho}(\alpha) w\sigma_i\widetilde{\rho}(\beta)+z\widetilde{\rho}(\alpha)w^{-1}\sigma_i^{-1}\widetilde{\rho}(\beta)\\
            &=x \tilde{\rho}(\alpha\beta)+y\widetilde{\rho}(\alpha\sigma_i\beta)+z\widetilde{\rho}(\alpha\sigma_i^{-1}\beta).
        \end{align*}
        
        Then, the result follows by applying trace and  normalizing.
    \end{proof}

\begin{theorem}\label{skeinth2}
    The value of the invariant $\widetilde{\Upsilon'}_{x,y}$ on a $k$ component cts-link $(L,I)$ is uniquely determined by the same rules (\ref{initial1})-(\ref{skeintie}) as $\widetilde{\Upsilon}_{x,y,z}$ in Theorem \ref{skeinth} tough the desingularization is replaced by 
    \begin{equation}\label{desing2}
   \widetilde{\Upsilon'}_{x,y}(L_{\times},I)=x\widetilde{\Upsilon'}_{x,y}(L_0,\tilde{I}_{i,j})+y\widetilde{\Upsilon'}_{x,y}(L_+,I_{i,j}).
    \end{equation}
\end{theorem}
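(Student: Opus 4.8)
The plan is to follow the same strategy that proved Theorem \ref{skeinth}, adapting only the desingularization step to the representation $\widetilde{\varrho}$. First I would observe that for ct--links the invariant $\widetilde{\Upsilon'}_{x,y}$ restricts to the invariant $\widetilde{\Upsilon}$ (since, when the partition has only single blocks and there are no singular crossings, $\widetilde{\varrho}$ agrees with the representation used to define $\widetilde{\Upsilon}$), so the normalization rules (\ref{initial1}) and (\ref{initial2}) and the skein relation (\ref{skeintie}) hold verbatim by \cite[Theorem 3]{aiju2020}, exactly as in the proof of Theorem \ref{skeinth}. This reduces everything to establishing the new desingularization rule (\ref{desing2}).

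For the desingularization rule, I would write $L_{\times}=\widehat{\alpha\tau_i\beta}$ for some $\alpha,\beta\in SB_n$ and some $1\leq i\leq n-1$, so that $L_0=\widehat{\alpha\beta}$ and $L_+=\widehat{\alpha\sigma_i\beta}$. The key computation is to expand $\widetilde{\varrho}(\alpha\tau_i\beta)$ using the definition $\tau_i\mapsto E_i(x+ywR_i)$ from Proposition \ref{newrep}(ii). Pulling out $\alpha$ and $\beta$ via the homomorphism property gives
\begin{equation*}
\widetilde{\varrho}(\alpha\tau_i\beta)=x\,\widetilde{\varrho}(\alpha)E_i\widetilde{\varrho}(\beta)+yw\,\widetilde{\varrho}(\alpha)E_iR_i\widetilde{\varrho}(\beta).
\end{equation*}
The second term, after applying the trace and normalizing, yields $y\,\widetilde{\Upsilon'}_{x,y}(L_+,I_{i,j})$: the factor $E_iR_i$ is precisely $\widetilde{\varrho}(e_i\sigma_i)$, and the idempotent $e_i$ merges the blocks of the components $C_i,C_j$ meeting at the crossing, which accounts for the partition $I_{i,j}$ on the nose. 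The first term contributes $x\,\widetilde{\Upsilon'}_{x,y}(L_0,\tilde I_{i,j})$: here $E_i=\widetilde{\varrho}(e_i)$, and since the two strands are now smoothed and tied, the corresponding partition is the one obtained by joining and then omitting the index $j$, i.e. $\tilde I_{i,j}$.

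The main obstacle — and the only point requiring genuine care — is the bookkeeping of how the set partition transforms under each term, in particular justifying that the idempotent $E_i$ produces exactly $\tilde I_{i,j}$ rather than $I_{i,j}$ in the smoothed term. The right way to handle this is to use that $TSB_n\cong\PP{n}\rtimes SB_n$ (so elements are pairs $(I,\alpha)$), together with the analysis in Remark \ref{tiedtoctlink}: when the crossing is smoothed, either the two merging components become one, or one component splits into two tied components, and in both cases applying $e_i$ and then reading off the closure gives the partition $\tilde I_{i,j}$ on the $(k-1)$ or $(k+1)$ components of $L_0$. Once this correspondence is verified, applying the trace $\Tr{}$ and the normalization factor $(\A w)^{-(n-1)}$ to the displayed identity gives (\ref{desing2}) directly, and the proof concludes by noting that rules (\ref{initial1})--(\ref{skeintie}) together with (\ref{desing2}) suffice to compute $\widetilde{\Upsilon'}_{x,y}$ on any cts--link by induction on the number of singular crossings and the complexity of the underlying diagram.
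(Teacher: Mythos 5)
Your proposal is correct and takes essentially the same route as the paper: reduce to the new desingularization rule, write $L_{\times}=\widehat{\alpha\tau_i\beta}$, expand $\widetilde{\varrho}(\alpha\tau_i\beta)=x\,\widetilde{\varrho}(\alpha e_i\beta)+y\,\widetilde{\varrho}(\alpha\sigma_ie_i\beta)$ using $\tau_i\mapsto E_i(x+ywR_i)$, then apply the trace and normalize. Your added bookkeeping of how $E_i$ produces $\tilde{I}_{i,j}$ versus $I_{i,j}$ (and your identification $L_0=\widehat{\alpha\beta}$, which corrects a typo in the paper's proof, where $L_0$ is misprinted as $\widehat{\alpha\tau_i\beta}$) merely makes explicit what the paper compresses into ``the result follows by applying trace and normalizing.''
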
  
 \begin{proof}
     It suffices to prove that $\widetilde{\Upsilon'}_{x,y}$ satisfies (\ref{desing2}). Thus, suppose $L_{\times}=\widehat{\alpha\tau_i\beta}$ for some $\alpha,\beta \in SB_n$ and $1\leq i\leq n-1$. Then $L_0=\widehat{\alpha\tau_i\beta}$ and $L_+=\widehat{\alpha\g{i}\beta}$.  By definition of the homomorphism $\widetilde{\varrho}$ we have that
\begin{align*}
    \widetilde{\varrho}(\alpha\tau_i\beta)&=\widetilde{\varrho}(\alpha) (xE_i)\widetilde{\varrho}(\beta)+y\widetilde{\varrho}_{w,x,y,z}(\alpha) (wE_i\sigma_i)\widetilde{\varrho}(\beta)\\
    &=x\widetilde{\varrho}(\alpha e_i\beta)+y\widetilde{\varrho}(\alpha\sigma_ie_i\beta).
\end{align*}

Then, the result follows by applying trace and  normalizing.
 \end{proof}
 
 \begin{corollary}\label{corsing}
  The singular link invariant $\Upsilon_{x,y,z}$ satisfies the following desingularization rule,
  \begin{equation}\label{desrule}
       \Upsilon_{x,y,z}(L_{\times})=x\Upsilon_{x,y,z}(L_0)+y\Upsilon_{x,y,z}(L_+)+z\Upsilon_{x,y,z}(L_-).
  \end{equation}

  \end{corollary}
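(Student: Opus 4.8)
The plan is to derive Corollary~\ref{corsing} directly from Theorem~\ref{skeinth} by specializing the cts--link invariant $\widetilde{\Upsilon}_{x,y,z}$ to the case of the trivial partition, using Remark~\ref{singversion}. Recall from that remark that for any $k$--component singular link $L$ we have the identity $\widetilde{\Upsilon}_{x,y,z}(L,\mathbf{1}_k)=\Upsilon_{x,y,z}(L)$, where $\mathbf{1}_k$ is the partition into single blocks. Thus the strategy is to instantiate the desingularization rule (\ref{desingtie}) at the partition $I=\mathbf{1}_k$ and then read off each of the four resulting terms as the value of $\Upsilon_{x,y,z}$ on the corresponding singular link.

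First I would apply rule (\ref{desingtie}) of Theorem~\ref{skeinth} with $I=\mathbf{1}_k$, giving
\[
\widetilde{\Upsilon}_{x,y,z}(L_\times,\mathbf{1}_k)
= x\,\widetilde{\Upsilon}_{x,y,z}(L_0,(\mathbf{1}_k)^*_{i,j})
+ y\,\widetilde{\Upsilon}_{x,y,z}(L_+,\mathbf{1}_k)
+ z\,\widetilde{\Upsilon}_{x,y,z}(L_-,\mathbf{1}_k).
\]
By Remark~\ref{singversion}, the left-hand side equals $\Upsilon_{x,y,z}(L_\times)$, and the second and third terms on the right equal $y\,\Upsilon_{x,y,z}(L_+)$ and $z\,\Upsilon_{x,y,z}(L_-)$ respectively, since those partitions are again trivial.

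The step requiring attention is the first term on the right, $x\,\widetilde{\Upsilon}_{x,y,z}(L_0,(\mathbf{1}_k)^*_{i,j})$, because the partition $(\mathbf{1}_k)^*_{i,j}$ is \emph{not} in general the trivial partition. Here I would invoke the geometric observation recorded in Remark~\ref{tiedtoctlink}: when one smooths a singular crossing of $L_\times$ to obtain $L_0$, either two distinct components of $L_0$ merge (so the added tie is superfluous) or a single component splits into two tied components. In the first case ($i<j$, distinct components) the tie joining the two strands is absorbed into the now-merged component and hence carries no information, so the ct--link $(L_0,\tilde{I}_{i,j})$ has the same value as $(L_0,\mathbf{1}_{k-1})$; in the second case ($i=j$) the rule for $(\mathbf{1}_k)^+$ together with initial rule (\ref{initial2}) and the fact that the single block $\{k+1\}$ is trivial again reduces the tied invariant to the plain singular invariant of $L_0$. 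In both cases $\widetilde{\Upsilon}_{x,y,z}(L_0,(\mathbf{1}_k)^*_{i,j})=\Upsilon_{x,y,z}(L_0)$, and substituting yields exactly (\ref{desrule}). The main obstacle is therefore purely the bookkeeping of verifying that the tie produced by desingularization is always removable when one starts from the trivial partition; this is immediate from the definition of $(\mathbf{1}_k)^*_{i,j}$ and the interpretation of ties on a merged versus split component.
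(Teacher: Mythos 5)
Your proposal is correct and takes exactly the paper's route: the paper's own proof is the one-liner ``It follows by Remark \ref{singversion} and Theorem \ref{skeinth},'' which is precisely your specialization of the desingularization rule (\ref{desingtie}) at $I=\mathbf{1}_k$. One correction to the middle of your argument, though: the step you flag as ``requiring attention'' is actually immediate, because $(\mathbf{1}_k)^*_{i,j}$ \emph{is} the trivial partition in both cases. By definition (e), for $i<j$ it equals $\tilde{I}_{i,j}$ with $I=\mathbf{1}_k$, i.e.\ $\mathbf{1}_k$ with $j$ omitted, which is $\mathbf{1}_{k-1}$; and for $i=j$ it equals $(\mathbf{1}_k)^+=\mathbf{1}_{k+1}$ (note the paper's $I^*_{i,i}$ is $I^+$, \emph{not} $\tilde{I}_{i,i}$, so no tie joins the two new components). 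Hence Remark \ref{singversion} applies directly to the $L_0$ term, and your detour through Remark \ref{tiedtoctlink} and rule (\ref{initial2}) is unnecessary --- indeed rule (\ref{initial2}) is inapplicable there, since $L_0$ is not a disjoint union with an unknot. The ``superfluous tie / tied-together'' discussion of Remark \ref{tiedtoctlink} concerns the smoothing-with-tie term $(L_0,\tilde{I}_{i,j})$ of the skein relation (\ref{skeintie}) and the invariant $\widetilde{\Upsilon'}_{x,y}$, whose representation $\varrho$ carries the factor $E_i$; the representation $\rho$ sends $\tau_i\mapsto x+ywR_i+zw^{-1}R_i^{-1}$ with no $E_i$ factor, which is exactly why the desingularization for $\widetilde{\Upsilon}_{x,y,z}$ introduces no tie and the corollary drops out of the trivial-partition specialization with no further bookkeeping.
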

  \begin{proof}
      It follows by Remark \ref{singversion} and Theorem \ref{skeinth}.
  \end{proof}

\begin{remark}\label{uniontied}
    Let $L$ be a singular link of $k$ components and $I\in \PP{k}$. Consider the cts-link $(L',\widetilde{I}_{j,j})$ where $1\leq j\leq k$ and $L'=L\sqcup \bigcirc$. Then, skein relation (\ref{skeintie}) from Theorem \ref{skeinth} implies that
    $$\widetilde{\Upsilon}_{x,y,z}(L', \widetilde{I}_{j,j})=\frac{\bb}{\A w}\widetilde{\Upsilon}_{x,y,z}(L, I).$$
\end{remark}
  
 \subsection{Comparison of invariants}
  In this section, we give a initial comparison between the Aicardi--Juyumaya invariants, the Paris--Rabenda invariant and the invariants $\widetilde{\Upsilon}_{x,y,z}$ and  $\widetilde{\Upsilon'}_{x,y}$ introduced above.\smallbreak
  Let be $P$ the HOMFLY--PT polynomial, $\widetilde{\Delta}$ and $\widetilde{\Theta}$ the extensions to ct--links (tied links) of $\Delta$ and $\Theta$, respectively. Recall that by \cite[Proposition 9]{aiju2018}, we have:
  \begin{enumerate}[(i)]
      \item For a classical link $L$, $$\hat{I}(L)=P(L).$$
      \item For a classical link $L$ and $I$ the partition with a unique block,
      $$\widetilde{\Delta}(L,I)=\widetilde{\Theta}(L,I)=P(L).$$
      \item For a classical link $L$ with $k$ components,
      $$\Phi_{x,y}(L,1_k)=\Psi_{x,y}(L,1_k)=\Delta(L)\quad \text{and} \quad \Phi'_{x,y}(L,1_k)=\Psi'_{x,y}(L,1_k)=\Theta(L). $$
       \item For a singular link $L$ and $I$ the partition with a unique block,
      $$\Phi_{x,y}(L,I)=\Psi_{x,y}(L,I)=\widehat{I}(L).$$
  \end{enumerate}

 \noindent Additionally, by \cite[Remark 8]{aiju2020} implies that:
 \begin{enumerate}[(i)]
\item For a classical link $L$ and $I$ the partition with a unique block,
      $$\widetilde{\Upsilon}_{x,y,z}(L,I)=\widetilde{\Upsilon'}_{x,y}(L,I)=P(L).$$
     \item For a singular link $L$ and $I$ the partition with a unique block,
      $$\widetilde{\Upsilon}_{x,y,z}(L,I)=\widetilde{\Upsilon'}_{x,y}(L,I)=\widehat{I}(L).$$
 \end{enumerate}
\begin{remark}
Note that the invariants $\widetilde{\Delta}$ and $\widetilde{\Theta}$ are equivalent to the invariants $\mathcal{F}$ and $\mathcal{F}'$, respectively, which are used in \cite[Section 8]{aiju2018}, for details see \cite[Theorem 5]{aiju2}.
\end{remark}

\begin{figure}[h!]
    \centering
    \includegraphics{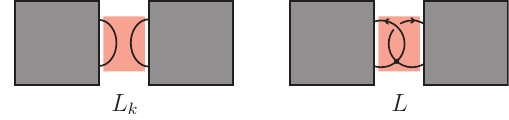}
    \caption{}
    \label{fig:L_kyL}
\end{figure}

\begin{proposition}\label{prop:compinv}
  Let $L_k$ be the union of $k$ circles unlinked. If $L$ is the link obtained from $L_k$ by modifying it locally  as in Figure \ref{fig:L_kyL}. Then,
    \begin{equation*}
        \widetilde{\Upsilon}_{x,y,z}(L,1_k)\not=\widetilde{\Upsilon}_{x,y,z}(L,\{\{i,j\}\}) \quad \text{and}\quad \widetilde{\Upsilon'}_{x,y}(L,I_k)=\widetilde{\Upsilon'}_{x,y}(L,\{\{i,j\}\}).
    \end{equation*}
\end{proposition}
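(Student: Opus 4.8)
The strategy is to compute both invariants explicitly on the link $L$ equipped with the two relevant partitions, using the representations $\widetilde{\rho}$ and $\widetilde{\varrho}$ together with the Markov trace $\Tr{}$. First I would fix a singular braid word $\alpha \in SB_k$ whose closure is $L$; since $L$ is obtained from the trivial link $L_k$ by a single local modification as in Figure \ref{fig:L_kyL} involving two strands (say the $i$-th and $j$-th, producing components $C_i$ and $C_j$), the word $\alpha$ will be short and involve only the generators acting on those two strands. The key point is that the two partitions to compare, namely $\mathbf{1}_k$ and $(\{i,j\})$, differ only in whether the blocks containing $i$ and $j$ are joined, so under the extended representations the difference amounts to inserting the idempotent $E_i$ (or a product of $E$'s tying the relevant strands) into the braid image before taking the trace.

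\textbf{Main computation.} For the first inequality I would evaluate $\widetilde{\Upsilon}_{x,y,z}(L,\mathbf{1}_k)$ and $\widetilde{\Upsilon}_{x,y,z}(L,(\{i,j\}))$ by pushing the respective group-algebra elements through $\Tr{}$ using rules (i)--(iii) of the Markov trace, the quadratic relation (\ref{bt9}), and the relations (\ref{bt1})--(\ref{bt8}). The tied partition $(\{i,j\})$ forces an extra factor of $E_i$ (via $e_i \mapsto E_i$ under $\widetilde{\rho}$) in the relevant position, and evaluating the trace of the tied configuration will produce the parameter $\bb$ where the untied configuration produces a $1$ (or more precisely, an $\A$-weighted term via rule (iii) versus a $\bb$-weighted term via the $E_n$ rule). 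Since $\A$ and $\bb$ are algebraically independent variables over $\KK$, the two resulting elements of $\LL(\A,\bb)$ will be distinct polynomials, giving the strict inequality. For the equality I would run the same computation with $\widetilde{\varrho}$ in place of $\widetilde{\rho}$; here the crucial structural fact is the observation made in the Remark following Theorem \ref{newinvariants}, namely that $E_iR_i^{-1} = (1-v)u^{-1}E_i + u^{-1}E_iR_i$, so that $\varrho$ already absorbs every $\tau_i$ into an expression beginning with $E_i$. Consequently the image $\widetilde{\varrho}(\alpha)$ already contains the idempotent $E_i$ that ties strands $i$ and $j$, and pre-multiplying by the extra $E_i$ coming from the partition $(\{i,j\})$ changes nothing by idempotency (\ref{bt2}) and the commuting/absorption relations (\ref{bt6}). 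Hence the two traces coincide and the equality holds.

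\textbf{Main obstacle.} The hard part will be identifying precisely which idempotent the partition $(\{i,j\})$ contributes and verifying that, after transporting it through $\widetilde{\varrho}(\alpha)$ using the tied-braid relations (\ref{TS1})--(\ref{TS4}) and (\ref{TSB1})--(\ref{TSB3}), it genuinely coincides with a factor already present in the $\widetilde{\varrho}$-image rather than merely a similar-looking one; the subtlety is that $\mu_{i,j}$ need not equal $E_i$ unless the strands $i$ and $j$ are adjacent in the braid closure, so I would first use the semidirect-product description $TSB_n = \PP{n} \rtimes SB_n$ and the explicit geometry of Figure \ref{fig:L_kyL} to reduce to the adjacent case (or conjugate the idempotent into the correct position using relations (\ref{bt5})--(\ref{bt6})). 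Once the idempotent is correctly placed, the $\widetilde{\Upsilon'}_{x,y}$ equality is a direct consequence of idempotency, while the $\widetilde{\Upsilon}_{x,y,z}$ inequality reduces to exhibiting one concrete term (the $\bb$-dependence of the tied trace) that the untied trace lacks.
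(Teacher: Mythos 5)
Your proposal is correct and follows essentially the same route as the paper: expanding $\widetilde{\rho}(\tau_i)=x+ywR_i+zw^{-1}R_i^{-1}$ and $\widetilde{\varrho}(\tau_i)=E_i(x+ywR_i)$ under the trace is exactly the content of the desingularization rules of Theorems \ref{skeinth} and \ref{skeinth2}, which the paper's proof simply cites, and your observation that the tied configuration produces a $\bb$ where the untied one produces a $1$ is precisely Remark \ref{uniontied}. The structural mechanism you isolate---every term of $\widetilde{\varrho}(\tau_i)$ carries the idempotent tying the two interacting components, so the extra tie of $\{\{i,j\}\}$ is absorbed by idempotency, whereas the $y$- and $z$-resolutions under $\widetilde{\rho}$ retain the original partition---is the same one the paper exploits, so your argument matches the published proof up to unpacking those two ingredients.
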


\begin{proof}
 Let be $C_i$ and $C_j$ the components of $L$ that interact in its singular crossing. Then, applying desingularization rule from Theorem \ref{skeinth} we have
$$\widetilde{\Upsilon}_{x,y,z}(L,1_k)=x\widetilde{\Upsilon}_{x,y,z}(L_0,1_{k-1})+y\widetilde{\Upsilon}_{x,y,z}(L_+,1_k)+z\widetilde{\Upsilon}_{x,y,z}(L_-,1_k),$$
and 
$$\widetilde{\Upsilon}_{x,y,z}(L,\{\{i,j\}\})=x\widetilde{\Upsilon}_{x,y,z}(L_0,1_{k-1})+y\widetilde{\Upsilon}_{x,y,z}(L_+,\{\{i,j\}\})+z\widetilde{\Upsilon}_{x,y,z}(L_-,\{\{i,j\}\}).$$

   Thus, $ \widetilde{\Upsilon}_{x,y,z}(L,1_k)\not=\widetilde{\Upsilon}_{x,y,z}(L,\{\{i,j\}\})$ by Remark \ref{uniontied}. On the other hand, applying desingularization rule from Theorem \ref{skeinth2} we obtain
$$\widetilde{\Upsilon'}_{x,y}(L,1_k)=\widetilde{\Upsilon'}_{x,y}(L,\{\{i,j\}\})=x\widetilde{\Upsilon'}_{x,y}(L_0,1_{k-1})+y\widetilde{\Upsilon'}_{x,y,z}(L_+,\{\{i,j\}\}),$$
   and the proof follows.
\end{proof}

 Consequently, the invariant $\widetilde{\Upsilon}_{x,y,z}$ is stronger than the invariant $\widetilde{\Upsilon'}_{x,y}$ on cts--links, cf. \cite[Proposition 10]{aiju2018}. 
 % Thus, from now on we only use the invariants $\widetilde{\Upsilon}_{x,y,z}$ and $\Upsilon_{x,y,z}$. 
 Moreover, the following result shows that $\Upsilon_{x,y,z}$ distinguishes pairs of singular links that $\Psi_{x,y}$ does not.

\begin{proposition}
    For $z=0$, there exists a pair of singular links that are distinguished by $\Upsilon_{x,y,z}$, but it is not distinguished by $\Psi_{x,y}$.
\end{proposition}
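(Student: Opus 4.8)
The plan is to exhibit a concrete pair of singular links on which the two parameter invariant $\Upsilon_{x,y,z}$ (with $z=0$) separates, whereas the one parameter invariant $\Psi_{x,y}$ coincides. The guiding principle is that $\Upsilon_{x,y,z}$ is built from the two parameter algebra $\EE{u}{v}$, whose Markov trace already separates classical links that $\Delta$ and $\Theta$ cannot (see Remark~\ref{rem:compinv} and the table therein, where pairs such as $L11n358\{0,1\}$ versus $L11n418\{0,0\}$ are distinguished by $\Upsilon$ but not by $\Theta$ or $\Delta$). First I would take such a classical pair $L, L'$ where $\Upsilon(L)\neq\Upsilon(L')$ but $\Delta(L)=\Delta(L')$, and promote each to a singular link by introducing a singular crossing in a controlled way, for instance replacing one classical crossing of a chosen braid word by the singular generator $\tau_i$.

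The key computation then runs through the desingularization rules. With $z=0$, Corollary~\ref{corsing} gives $\Upsilon_{x,y,z}(L_\times)=x\,\Upsilon_{x,y,z}(L_0)+y\,\Upsilon_{x,y,z}(L_+)$, so the value on the singular link is an $x,y$--linear combination of values of $\Upsilon$ on two classical links $L_0, L_+$. The analogous rule for $\Psi_{x,y}$ (equivalently the specialization $v=u$ recovering the one parameter setting) expresses $\Psi_{x,y}(L_\times)$ as the same kind of combination but evaluated through $\Delta$ rather than $\Upsilon$. Thus I would arrange the singular crossing so that its two classical resolutions $L_0$ and $L_+$ realize the distinguishing behavior: I want $\Upsilon(L_0)-\Upsilon(L_0')$ and $\Upsilon(L_+)-\Upsilon(L_+')$ to combine to a nonzero polynomial in $x,y$, while the corresponding $\Delta$--values agree so that $\Psi_{x,y}(L_\times)=\Psi_{x,y}(L_\times')$. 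Concretely I would pick the singularized versions of the LinkInfo pair so that one resolution reproduces the original distinguished classical pair and the other resolution is a pair sharing all relevant invariants.

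The main obstacle will be controlling both resolutions simultaneously: it is easy to make one resolution distinguished, but I must ensure the $\Psi_{x,y}$ values genuinely coincide, which requires that \emph{all} the classical resolutions $L_0, L_+$ have equal $\Delta$ (the invariant underlying $\Psi_{x,y}$) across the two candidate singular links, while at least one resolution has differing $\Upsilon$. I would verify this by direct computation of the traces $\Tr{n}\circ\rho$ for an explicit singular braid word $\alpha$ with $L_\times=\widehat{\alpha\tau_i\beta}$, expanding $\widetilde\rho(\tau_i)=E_i x + wy E_i R_i$ style contributions against the basis of $\EE{u}{v}$, and then specializing via $v=u,\ z=0$ to recover the $\Psi_{x,y}$ value. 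Since the separating power of $\Upsilon$ over $\Delta$ on classical links is already established, the cleanest route is to choose the singular crossing so that the $y$--coefficient carries the classical $\Upsilon$ versus $\Delta$ discrepancy and the $x$--coefficient, whose resolution $L_0$ has fewer components, contributes an identical value on both links; I expect the remaining work to be a finite explicit evaluation rather than a conceptual difficulty.
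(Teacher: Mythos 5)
Your overall strategy is the paper's: start from the classical pair $L_1=L11n358\{0,1\}$, $L_2=L11n418\{0,0\}$ of Remark \ref{rem:compinv}, for which $\Delta(L_1)=\Delta(L_2)$ but $\Upsilon(L_1)\neq\Upsilon(L_2)$, singularize, and compare the two invariants through the $z=0$ desingularization rule of Corollary \ref{corsing} against its $\Delta$--analogue for $\Psi_{x,y}$. But as written the proposal has a genuine gap: you never exhibit the singular pair, and you defer precisely the step on which the argument rests. You correctly name the obstacle (``controlling both resolutions simultaneously'': every classical resolution must have equal $\Delta$ across the two candidates, while some resolution must separate under $\Upsilon$), but your answer to it is a promissory note --- ``direct computation of the traces'', ``a finite explicit evaluation'' --- rather than an argument. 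If you merely replace a crossing of a braid word for $L_i$ by $\tau_i$, the smoothing $L_0$ is an uncontrolled link, and the required coincidences $\Delta(L_0)=\Delta(L_0')$, $\Delta(L_+)=\Delta(L_+')$ would each demand fresh computations that nothing in your outline supplies or guarantees can be arranged.

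The paper closes exactly this gap by a choice of singularization that makes all verification automatic: $SL_i$ is obtained from $L_i$ by a local modification whose two resolutions are isotopic to $L_i$ itself and to $L_i\sqcup\bigcirc$. Rule (\ref{initial2}) of Theorem \ref{skeinth} then gives the factorization
\begin{equation*}
\Upsilon_{x,y,0}(SL_i)=x\,\Upsilon(L_i)+y\,\Upsilon(L_i\sqcup\bigcirc)=\Bigl(x+\tfrac{y}{\A w}\Bigr)\Upsilon(L_i),
\qquad
\Psi_{x,y}(SL_i)=\Bigl(x+\tfrac{y}{\A w}\Bigr)\Delta(L_i),
\end{equation*}
so the agreement of $\Delta$ and disagreement of $\Upsilon$ on the classical pair transfer verbatim, with no new invariant values to compute. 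Note this is weaker than what you asked for: the paper does not need the $x$--resolution to take identical values on the two links (both coefficients differ under $\Upsilon$); it only needs the $\Delta$--values to coincide, which is automatic once both resolutions depend on $L_i$ alone up to a split unknot. Adopting this construction turns your outline into the paper's proof. One further slip: the expansion you quote, $\widetilde{\varrho}(\tau_i)=xE_i+wyE_iR_i$, belongs to the representation $\varrho$ underlying $\Upsilon'_{x,y}$ (Theorem \ref{skeinth2}), not to $\rho$; the invariant $\Upsilon_{x,y,z}$ of the statement uses $\tau_i\mapsto x+ywR_i+zw^{-1}R_i^{-1}$, which at $z=0$ is $x+ywR_i$ --- the distinction matters, since under $\varrho$ the $x$--term carries a tie $E_i$ and the skein rule changes the partition data.
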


 \begin{proof}
  Set $L_1=L11n358\{0, 1\}$ and $L_2=L11n418\{0, 0\}$ and consider pair of singular links in Figure \ref{slinks}
	
	% TODO: \usepackage{graphicx} required
\begin{figure}[h!]\label{slinks}
\centering\includegraphics{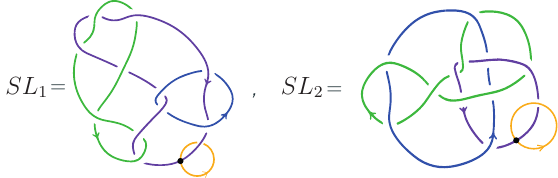}
\caption{Singular links $SL_1$ and $SL_2$.}
\end{figure}

	Thus, applying the desingularization rule to $SL_1$ we obtain
	% TODO: \usepackage{graphicx} required
	\begin{figure}[h!]
		\centering
		\includegraphics{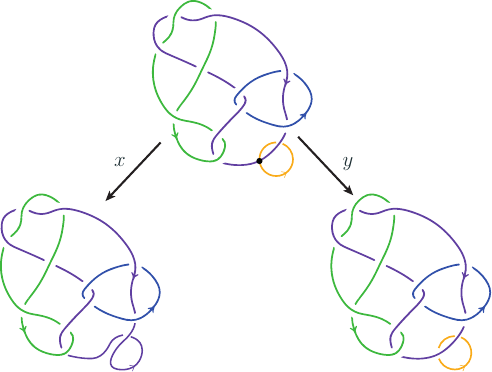}
	\end{figure}
where the link on the left is isotopic to $L_1$ and the other one is isotopic to $L_1\sqcup \bigcirc$.

	On the other hand, applying the desingularization rule to $SL_2$ we obtain
		\begin{figure}[h!]
		\centering
		\includegraphics{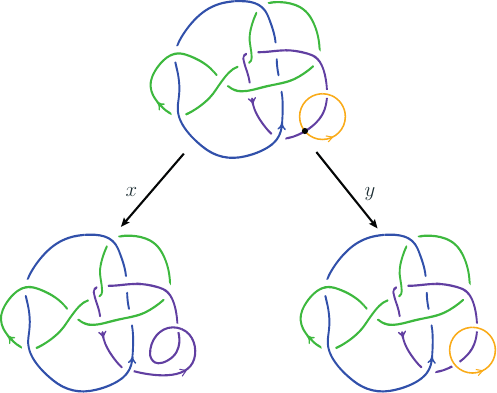}
	\end{figure}
where the link on the left is isotopic to $L_2$ and the other one is isotopic to $L_2\sqcup \bigcirc$.

	Then,  we have 
	\begin{align*}
		\Upsilon_{x,y,0}(SL_i)&=  x\Upsilon_{x,y,0}(L_i)+ y\Upsilon_{x,y,0}(L_i\sqcup \bigcirc)\\
		&=  x\Upsilon(L_i)+ y\Upsilon(L_i\sqcup \bigcirc) \\
		&= x\Upsilon(L_i)+ \frac{y}{aw}\Upsilon(L_i)\\
		&= (x+ \frac{y}{aw})\Upsilon(L_i)
	\end{align*}
for $i=1,2$. Analogously, we can obtain that $\Psi_{x,y}(SL_i)=(x+ \frac{y}{aw})\Delta(L_i)$, for $i=1,2$. And, we know that $\Delta(L_1)=\Delta(L_2)$ and $\Upsilon(L_1)\not=\Upsilon(L_2)$. Thus, the pair is distinguished by $\Upsilon_{x,y,0}$ but it is not distinguished by $\Psi_{x,y}$.   
 \end{proof}

\subsubsection{Further directions}

In our pursuit of understanding the implications of the parameter $z$ within the representation $\rho$ of Proposition \ref{newrep}, it becomes apparent that its inclusion is intended to improve the invariant $\widetilde{\Upsilon}_{x,y,z}$ in comparison to its $z=0$ counterpart, $\widetilde{\Upsilon}_{x,y,0}$. This improvement would come from the desingularization rule (\ref{desingtie}) in Theorem \ref{skeinth}. We believe that the introduction of the $z$-term into the desingularization process yields  cts--links that are not catpured by the invariant $\widetilde{\Upsilon}_{x,y,0}$.

However, this assertion needs a more comprehensive database of the invariant $\Upsilon$. Specifically, we need to find new pairs of links undifferentiated by $\Upsilon$ to potentially generate a pair of singular links that $\widetilde{\Upsilon}_{x,y,0}$ does not distinguish but $\widetilde{\Upsilon}_{x,y,z}$ does. While the theoretical framework hints at the potential discriminatory power of $\widetilde{\Upsilon}_{x,y,z}$ over $\widetilde{\Upsilon}_{x,y,0}$, empirical evidence remains elusive. Thus far, we have not found yet an example that proves this assertion. Nevertheless, we still believe that such examples exist. Hence, we propose the following conjecture:

\begin{conjecture}
    There exists a pair of cts--links that are distinguished by $\widetilde{\Upsilon}_{x,y,z}$ but remain indistinguishable under $\widetilde{\Upsilon}_{x,y,0}$.
\end{conjecture}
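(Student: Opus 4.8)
The plan is to reduce the conjecture to a purely classical statement about the invariant $\Upsilon$ and then to establish it by an explicit search over link diagrams with a marked crossing. Since every singular link is a cts--link with trivial partition, by Remark \ref{singversion} it suffices to exhibit a pair of singular links $SL_1,SL_2$, each carrying a single singular crossing, that $\Upsilon_{x,y,z}$ separates while $\Upsilon_{x,y,0}$ does not; then $(SL_1,\mathbf{1}_k)$ and $(SL_2,\mathbf{1}_k)$ witness the conjecture for $\widetilde{\Upsilon}_{x,y,z}$ against $\widetilde{\Upsilon}_{x,y,0}$. By Corollary \ref{corsing}, for such a singular link the desingularization rule (\ref{desrule}) gives
$$\Upsilon_{x,y,z}(SL)=x\,\Upsilon(L_0)+y\,\Upsilon(L_+)+z\,\Upsilon(L_-),$$
so that setting $z=0$ simply deletes the $L_-$ contribution. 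Consequently, a distinguishing pair is furnished by two singular links whose positive resolutions satisfy $\Upsilon(L_+^{(1)})=\Upsilon(L_+^{(2)})$ and whose smoothings satisfy $\Upsilon(L_0^{(1)})=\Upsilon(L_0^{(2)})$, but whose negative resolutions satisfy $\Upsilon(L_-^{(1)})\neq\Upsilon(L_-^{(2)})$.

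Encoding a singular link with one singular crossing as a link diagram $D$ together with a marked crossing $c$, the three resolutions $L_+,L_-,L_0$ are obtained by declaring $c$ positive, negative, or oriented--smoothed, respectively. The search then runs over pairs $(D_1,c_1)$ and $(D_2,c_2)$ and asks for the two coincidences $\Upsilon(L_+^{(1)})=\Upsilon(L_+^{(2)})$ and $\Upsilon(L_0^{(1)})=\Upsilon(L_0^{(2)})$ together with the discrepancy $\Upsilon(L_-^{(1)})\neq\Upsilon(L_-^{(2)})$. A natural starting point is to take the negative resolutions from a pair of links already known to be separated by $\Upsilon$, such as one of the pairs tabulated in Remark \ref{rem:compinv} (for instance $L11n358$ versus $L11n418$); one then looks for a marked crossing in each whose crossing change yields $\Upsilon$--equal links and whose oriented smoothing likewise yields $\Upsilon$--equal links.

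The main obstacle is that realizing the two coincidences on $L_+$ and $L_0$ simultaneously, while keeping the discrepancy on $L_-$, cannot be read off from the HOMFLY--PT data and requires evaluating the strictly finer invariant $\Upsilon$ valued in $\KK(\A,\bb)$. Thus the bottleneck is computational: one must extend the tables of \cite{aiju2020} to a database of $\Upsilon$--values covering a sufficiently large family of links together with all their crossing changes and oriented smoothings, and then filter for the three conditions above. Heuristically the conjecture should hold, because the defining skein relation of $\Upsilon$ (cf. Theorem \ref{skeinth}) relates $L_+$ and $L_-$ to the tie--smoothings $L_\sim$ and $L_{+,\sim}$ rather than to the oriented smoothing $L_0$; there is therefore no identity forcing $\Upsilon(L_-)$ to be a function of $\Upsilon(L_+)$ and $\Upsilon(L_0)$, so the $z$--term should carry genuinely new information. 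Turning this independence into a theorem, however, demands an explicit witness, and producing one is precisely the computational task just described, which I expect to be the crux of the argument.
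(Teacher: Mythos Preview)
The statement you are attempting to prove is labelled a \emph{Conjecture} in the paper, and the paper does not prove it. Immediately before stating it, the authors write that ``empirical evidence remains elusive'' and that they ``have not found yet an example that proves this assertion.'' There is therefore no paper proof to compare against.

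Your proposal is not a proof either, and you acknowledge as much in the final paragraph: you reduce the problem to a search for a marked--crossing pair with $\Upsilon(L_+^{(1)})=\Upsilon(L_+^{(2)})$, $\Upsilon(L_0^{(1)})=\Upsilon(L_0^{(2)})$ and $\Upsilon(L_-^{(1)})\neq\Upsilon(L_-^{(2)})$, give a heuristic reason why such a triple should exist, and then defer the actual discovery of a witness to a computation that you do not carry out. This is essentially the same strategy the paper sketches in its ``Further directions'' paragraph---build a larger database of $\Upsilon$--values and search for the right configuration---so your proposal neither advances beyond nor deviates from the authors' stated plan. The genuine gap is simply that no explicit pair has been produced; until one is, the statement remains open.
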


\section{A construction via the two parameter Singular bt--algebra}\label{sec:invariant singular bt}
In this section we recover the invariant defined in the previous section by following the approach by Paris and Rabenda in \cite{pare}. In \cite{aiju2018}, it is proved that the monoid $TSB_n$ is isomorphic to the semidirect product $\PP{n}\rtimes SB_n$ yielded by the natural action of $SB_n$ on $\PP{n}$. Subsequently, in \cite{arju2020}, it is introduced the concept of tied monoids. These monoids are defined as semi direct products that are commonly built from braid groups and their underlying Coxeter group acting on monoids of set partitions; and $TSB_n$ is a particular case of a tied monoid. Additionally, they propose a procedure to construct tied algebras from tied monoids. Moreover, the two parameter bt--algebra can be obtained by this procedure by using the monoid $TB_n$, see \cite[Section 7]{arju2020}. Thus, we introduced a new two parameter algebra that is obtained as a tied algebra of the monoid $TSB_n$. More precisely, we have the following definition.

\begin{definition}\label{SBTalg}
The \textit{two parameter singular bt--algebra}, denoted by $\SE{}{n}:=\SE{}{n}(u,v)$, is the $\KK$-algebra generated by 
$R_1,\ldots , R_{n-1}$, $S_1,\ldots , S_{n-1}$, $E_1,\ldots , E_{n-1}$ subject 
to relations (\ref{bt1})-(\ref{bt9}) together wi the following relations:

\begin{align}
    & E_i S_j =  S_j E_i \qquad \text{if $\vert i-j\vert \neq 1$},\label{Sbt1}\\
    & E_iS_jS_i= S_jS_iE_j \qquad \text{if $\vert i  -  j\vert =1$}, \label{Sbt2}\\
    & E_iE_jS_i  = E_j S_i E_j  =  S_iE_iE_j \qquad \text{if $\vert i  -  j\vert =1$}, \label{Sbt3}\\
    & E_iS_jR_i  = S_jR_iE_j \qquad\text{ if $\vert i-j\vert=1$,} \label{Sbt4}\\
    & E_iR_jS_i = R_jS_i E_j \qquad\text{ if $\vert i-j\vert=1$,} \label{Sbt5}\\
    & S_iS_j= S_j S_i \qquad \text{if $\vert i - j\vert \not= 1$}, \label{Sbt6}\\
    & S_iR_j= R_j S_i \qquad \text{if $\vert i - j\vert \not= 1$}, \label{Sbt7}\\
    & R_iR_jS_i = S_jR_iR_j \qquad \text{if $\vert i - j\vert = 1$}, \label{Sbt8}%\\
    %& {\red S_iE_j = R_iE_jR_i^{-1}S_i, \qquad \text{if $\vert i - j\vert = 1$}.} \label{Sbt9}
\end{align}

\end{definition}

  % \noindent Observe that $R_i$ is invertible and  $R_i^{-1}=R_i +( 1-v)u^{-1}E_i + (u^{-1} - 1)E_iR_i$.
 \noindent The algebra $\SE{}{n}(u,v)$  can also be regarded as quotient of the monoid algebra $\KK(TSB_n)$ over the ideal generated by the elements $\sigma_i^2 - 1 -(u - 1)e_i - (v - 1)e_i\sigma_i$.

  \begin{remark}
      Let $\delta$ be a root of the polynomial $p({\sf x})=u({\sf x}+1)^2- (v-1)({\sf x}+1)-1$ and set $t:=u(\delta +1)^2$. Then, we have that $\mathcal{SE}_n(u,v)$ is isomorphic to the one parameter algebra $\mathcal{SE}_n(t,t)$ by \cite[Proposition 1]{aiju2020}.
  \end{remark}
  \begin{remark}
   For $u=v$, the mapping $R_i\mapsto h_i$ and $E_i\mapsto 1$ (resp. $R_i\mapsto g_i$, $E_i\mapsto 1$ and $S_i\mapsto p_i$) induces an homomorphism  $\phi: \E_n(u,u)\rightarrow H_n(u)$ (resp. $\psi: \SE{}{n}(u,u)\rightarrow SH_n(u)$). Then, we have the following commutative diagram
  
     $$\xymatrix{ \mathcal{E}_n(u,u)\ar@{^{(}->}[r] \ar_{\phi}[d] & \s{}{n}(u,u) \ar^{\psi}[d] \\  H_n(u) \ar@{^{(}->}[r] &  SH_n(u). }$$
  \end{remark}

%%%%%%%%%%%%%%%%% Poner las particularizaciones de los generadores para obtener las demás álgebras H_n, bt, SH_n, etc. y hacer un diagrama conmutativo
% \begin{remark}
% Observe that making $p_i=1$ for all $i$,  the algebra $\SE{}{n}(u,v)$ becomes the two parameter bt-algebra  $\mathcal{E}_n(u,v)$. We have the following commutative diagram.
% \end{remark}

\subsection{Markov traces on the two parameter Singular bt--algebra}\label{singularbt}
In this section, we construct a Markov trace on the algebra $\s{}{n}(u,v)$ by following the approach given in \cite{pare}. 
Let $\pi:TSB_n\rightarrow \SE{}{n}$ be the natural representation induced by the mapping $\sigma_i\mapsto R_i$, $\tau_i\mapsto S_i$ and $e_i\mapsto E_i$. For any subset $S\subset TSB_n$, $\pi(S)$ will be again denoted  by $S$, in particular, $\alpha\in TSB_n$ will denote $\pi(\alpha) \in \pi(TSB_n)$. 
%
% will be denoted again by $\alpha$. 
%
For $d\geq 0$, let $TS_dB_n$ be the set of tied singular braids with $d$ singular points. Then, we set $\SE{d}{n}$ as the $\mathbb{K}$--linear subspace of $\s{}{n}$ generated by $TS_dB_n$. Thus, we have
$$\s{}{n}=\bigoplus_{d=0}^{\infty} \SE{d}{n}.$$
In particular, $\s{0}{n}$ is the two parameter bt--algebra $\mathcal{E}_n(u,v)$.\smallbreak

% We denote the restriction $\pi|_{\s{d}{n}}$ by $\pi_d$. 
%% Notación del álgebra. La dejamos como la original, es decir E_i, R_i(o T_i) y _i
\begin{definition}
     For $d\geq 0$, a \textit{d-Markov trace} on $\{\SE{d}{n}\}_{n=1}^{\infty}$ is a family of $\mathbb{K}$--linear maps $\tr^d:=\{\tr_n^d\}_{n=0}^{\infty}$, where $\tr_n^d:\s{d}{n}\rightarrow \mathbb{K}(\A,\bb)$, satisfying the following
    \begin{enumerate}[(a)]
        \item $\tr_n^d(\alpha\beta)=\tr_n^d(\beta\alpha)$ for all $\alpha \in \TS{k}{n}$ and $\beta \in \TS{d-k}{n}$,
        \item $tr^d_{n+1}(\alpha)=\tr_n^d(\alpha)$ for all $\alpha \in \TS{d}{n}$,
        \item $\tr_{n+1}^d(\alpha R_n)=\tr_{n+1}^d(\alpha E_n R_n)= \A\tr_n^d(\alpha)$ for all $\alpha \in \TS{d}{n}$,
        \item $\tr_{n+1}^d(\alpha E_n)= \bb\tr_n^d(\alpha)$ for all $\alpha \in \TS{d}{n}$,
    \end{enumerate}
    for $n\geq 1$. Thus, a \textit{Markov trace} on $\{\s{}{n}\}_{n=1}^{\infty}$ is a family of $\mathbb{K}$--linear maps $\tr:=\{\tr_n\}_{n=0}^{\infty}$, such that $\{\tr_n^d:=\tr_n|_{\s{d}{n}}\}_{n=1}^{\infty}$ is a $d$-Markov trace for all $d\geq 0$.
\end{definition}

Set $X=\{0,1,-1\}$, and let be $\alpha=\alpha_0S_{i_1}\alpha_1S_{i_2}\cdots S_{i_{d}}\alpha_{d} \in \SE{d}{n}$, where $\alpha_l \in TB_n$. For $1\leq k \leq d$ and $r\in X$, we define the tied singular braid $\alpha_{r,k}\in \SE{d-1}{n}$ as follow:
% $$\alpha_{r,k}:=\left\{\begin{array}{ll}
%     \alpha_0\p{i_1}\alpha_1\dots\p{i_{k-1}}\alpha_{k-1}\alpha_{k}\p{i_{k+1}}\dots\p{i_{d}}\alpha_{d}, &  \text{for $r=0$}\\
%     \alpha_0\p{i_1}\alpha_1\dots\p{i_{k-1}}\alpha_{k-1}\g{i_k}\alpha_{k}\p{i_{k+1}}\dots\p{i_{d}}\alpha_{d},  &   \text{for $r=1$}\\
%     \alpha_0\p{i_1}\alpha_1\dots\p{i_{k-1}}\alpha_{k-1}\g{i_k}^{-1}\alpha_{k}\p{i_{k+1}}\dots\p{i_{d}}\alpha_{d}, & \text{for $r=-1$}
% \end{array}\right.$$
$$\alpha_{r,k}:=
    \alpha_0 S_{i_1}\alpha_1\cdots S_{i_{k-1}}\alpha_{k-1} R_{i_k}^r\alpha_{k} S_{i_{k+1}}\cdots S_{i_{d}}\alpha_{d}, 
    $$
where by convention $R_{i_k}^0=1$ for any $k$.

For $r\in X$,  we define the map $\theta_r:TS_dB_n\to\s{d-1}{n}$ as follows
\begin{equation}
    \theta_r(\alpha) =  \sum_{k=1}^{d}\alpha_{r,k}.
\end{equation}
Thus, the map $\theta_r$ naturally extends to a $\KK$--linear map from $\s{d}{n}$ into $\s{d-1}{n}$, which will be also denoted by $ \theta_r$ for simplicity.\smallbreak %% No se si r es la mejor elección de letra, pero X no puede ser

From now on we work in the algebra $\s{}{n}(u,v)\otimes_{\KK} \mathbb{L}$, which will be denoted again by $\s{}{n}(u,v)$ for simplicity. Similarly, for $d\geq 0$ the $\mathbb{L}$--linear subspaces of $\s{}{n}(u,v)$ spanned by the tied singular braid with $d$ singularities will be denoted again by $\s{d}{n}$.\smallbreak

For $d\geq 0$, we define the $\mathbb{L}$--linear map $\tr^{(d)}:\s{d}{n}\rightarrow \mathbb{L}$ inductively as follows:
\begin{equation}\label{SEtrace}
    \tr^{(d)}(\alpha)=\tr^{(d-1)}(x\theta_0(\alpha)+yw\theta_1(\alpha)+zw^{-1}\theta_{-1}(\alpha)),
\end{equation}
where $\tr^{(0)}:=\{\Tr{n}\}_{n=1}^{\infty}$ is the Aicardi-Juyumaya's trace on the two parameter bt-algebra (see Section \ref{bttwo}). Note that for all $d\geq 0$ and $0\leq k \leq d$, $\tr^{(d)}:=\{\tr^{(d)}_n\}_{n=1}^{\infty}$ is a Markov trace on $\s{d}{n}$ since the Aicardi-Juyumaya's trace is as well.\smallbreak

We now introduce some notation that will be useful in the sequel.   Let be $\alpha=\alpha_0 S_{i_1}\alpha_1 S_{i_2}\cdots S_{i_{d}}\alpha_{d}$ as above and $u=(u_1,u_2,\ldots,u_d)\in X^d$. We define $u(\alpha)$ as the tied singular braid obtained by replacing $S_{i_k}$ by $R_{i_k}^{u_k}$. More precisely, we have
$$u(\alpha)=\alpha_0 R^{u_1}_{i_1}\alpha_1 R_{i_2}^{u_2}\dots R_{i_{d}}^{u_d}\alpha_{d}.$$

\noindent For $u=(u_1,u_2,\ldots,u_d)\in X^d$, we define the content of $u$, denoted by $\lambda_u$, as follows:
\begin{equation}\label{content}
   \lambda_u:=x^{e_0(u)}y^{e_1(u)}z^{e_{-1}(u)}w^{(e_1(u)-e_{-1}(u))}\in \mathbb{L},
\end{equation}
where $e_r(u)$ denotes the number of appearances of $r$ in $u$.\smallbreak

For instance, given $\alpha=R_1E_1S_1R_2S_2R_3S_3\in TS_3B_4$ and $u=(1,-1,0)\in X^{3}$, we have $u(\alpha)=R_1E_1 R_1R_3$ and $e_{-1}=e_1=e_0=1$. Then, the content of $u$ is given by $\lambda(u)=xyz$.

\begin{remark}\label{forth4}
    Observe that for $\alpha=\alpha_0S_{i_1}\alpha_1S_{i_2}\dots S_{i_{d}}\alpha_{d}\in TS_dB_n$ and $u=(u_1,u_2,\ldots,u_d)\in X^d$ we have that  
    $$u(\alpha)=\check{u}_k(\alpha_{u_k,k}) \qquad \text{for all $1\leq k \leq d$,} $$ 
  where $\check{u}_k$ is the element in $X^{d-1}$ obtained by deleting the $k$-th coordinate of $u$. Moreover, $\lambda_u=sg(u_k)\lambda_{\check{u}_k}$, where 
  $$sg(u_k):=\left\{\begin{array}{ll}
     x,  &  \text{if } u_k=0\\
     yw,  &  \text{if } u_k=1\\
     zw^{-1}, & \text{if } u_k=-1 
  \end{array}\right.$$
  for $1\leq k\leq d$.
\end{remark}

\begin{theorem}\label{tracesrel}\rm
    Let $\widetilde{\rho}:TSB_n\rightarrow \E_n(u,v)$ be the representation given in Proposition \ref{prop:ext}. Then, the following equality holds 
    \begin{equation}\label{eqtraces}
        (\tr^{(d)}\circ \pi)(\alpha)=d!(tr^{(0)}\circ \widetilde{\rho} )(\alpha)
    \end{equation}
    for $d\geq 0$ and $\alpha\in TS_dB_n$.
\end{theorem}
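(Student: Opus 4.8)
The plan is to prove Eq.~(\ref{eqtraces}) by induction on the number of singularities $d$. The base case $d=0$ is immediate: for $\alpha \in TS_0B_n = TB_n$ the representation $\widetilde{\rho}$ restricts to the extension of $\rho$ to the tied braid monoid, $\pi(\alpha)=\alpha$ involves no $S_i$, and by definition $\tr^{(0)}=\{\Tr{n}\}$, so both sides equal $(\Tr{n}\circ\widetilde{\rho})(\alpha)$ and $0!=1$.

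\smallbreak

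For the inductive step, I would first unfold $\widetilde{\rho}(\alpha)$ for $\alpha=\alpha_0 S_{i_1}\alpha_1\cdots S_{i_d}\alpha_d$ using the defining rule $\tau_i\mapsto x+ywR_i+zw^{-1}R_i^{-1}$ from Proposition~\ref{newrep}. Expanding each of the $d$ factors $\widetilde{\rho}(\tau_{i_k})$ and distributing, one obtains
\begin{equation}\label{eq:rhoexpand}
\widetilde{\rho}(\alpha)=\sum_{u\in X^d}\lambda_u\, u(\alpha),
\end{equation}
where the sum runs over all choices $u=(u_1,\dots,u_d)\in X^d$, the braid $u(\alpha)$ is obtained by replacing each $S_{i_k}$ with $R_{i_k}^{u_k}$, and the coefficient is exactly the content $\lambda_u$ of Eq.~(\ref{content}); this matches the bookkeeping set up in Remark~\ref{forth4}. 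Applying $\tr^{(0)}=\Tr{}$ to Eq.~(\ref{eq:rhoexpand}) gives
\begin{equation}\label{eq:rhsexpand}
(\tr^{(0)}\circ\widetilde{\rho})(\alpha)=\sum_{u\in X^d}\lambda_u\,\Tr{n}(u(\alpha)).
\end{equation}

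\smallbreak

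Next I would expand the left-hand side from the recursive definition~(\ref{SEtrace}). Writing $\tr^{(d)}(\pi(\alpha))=\tr^{(d-1)}\big(x\theta_0(\pi(\alpha))+yw\theta_1(\pi(\alpha))+zw^{-1}\theta_{-1}(\pi(\alpha))\big)$ and using $\theta_r(\alpha)=\sum_{k=1}^d \alpha_{r,k}$, the left side becomes a sum over a choice of a distinguished index $k\in\{1,\dots,d\}$ together with a value $r\in X$ assigned to the $k$-th singularity, followed by $\tr^{(d-1)}$ applied to the braid $\alpha_{r,k}$ with one fewer singularity. The crucial identity, again recorded in Remark~\ref{forth4}, is that $u(\alpha)=\check u_k(\alpha_{u_k,k})$ and $\lambda_u=sg(u_k)\lambda_{\check u_k}$, which lets me match terms: applying the induction hypothesis to each $\tr^{(d-1)}(\alpha_{r,k})$ converts it to $(d-1)!\,\Tr{n}$ of an expansion over $X^{d-1}$. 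Carrying this out, the left-hand side becomes
\begin{equation}\label{eq:lhsexpand}
(\tr^{(d)}\circ\pi)(\alpha)=(d-1)!\sum_{k=1}^{d}\ \sum_{u\in X^d}\lambda_u\,\Tr{n}(u(\alpha)),
\end{equation}
because each term of the $k$-sum reproduces the full sum~(\ref{eq:rhsexpand}): summing over the value $r=u_k$ at position $k$ together with the $X^{d-1}$-worth of choices $\check u_k$ exactly reconstitutes the sum over $u\in X^d$. Since the summand no longer depends on $k$, the outer sum contributes a factor $d$, and combined with $(d-1)!$ this yields $d!\,(\tr^{(0)}\circ\widetilde{\rho})(\alpha)$, as desired.

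\smallbreak

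The main obstacle I anticipate is the combinatorial reindexing in the inductive step: one must verify carefully that summing over the distinguished index $k$ overcounts each multi-index $u\in X^d$ exactly $d$ times (once for each coordinate one could have singled out), and that the content factors $sg(u_k)\lambda_{\check u_k}$ reassemble correctly into $\lambda_u$ with no sign or power-of-$w$ discrepancy. This is precisely what Remark~\ref{forth4} is designed to make routine, so the proof is really a matter of invoking that remark and tracking the factorial carefully; the algebra in $\SE{}{n}$ itself plays no role beyond guaranteeing $u(\alpha)$ is a well-defined element, since both traces are $\KK$-linear and the expansion~(\ref{eq:rhoexpand}) is purely formal.
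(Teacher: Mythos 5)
Your proposal is correct and takes essentially the same route as the paper's own proof: the expansion $\widetilde{\rho}(\alpha)=\sum_{u\in X^d}\lambda_u\, u(\alpha)$ (the paper's ``multiplication principle''), induction on $d$ via the recursion (\ref{SEtrace}), and the reindexing identity of Remark \ref{forth4} showing each multi-index $u\in X^d$ arises exactly $d$ times, which produces the factor $d!$. The only cosmetic differences are your base case $d=0$ in place of the paper's $d=1$, and your more explicit accounting of the $d$-fold overcount that the paper compresses into a single appeal to Remark \ref{forth4}.
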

\begin{proof}Let be $\alpha=\alpha_0S_{i_1}\alpha_1S_{i_2}\dots S_{i_{d}}\alpha_{d}\in TS_dB_n$. First note that, for all $d\geq 0$, we have
  $$(\tr^{(0)}\circ  \widetilde{\rho} )(\alpha)=\sum_{u\in X^d} \tr^{(0)} (\lambda_u u(\alpha))$$ by multiplication principle.
  We now proceed the proof by induction on $d$. For $d=1$, Eq. (\ref{eqtraces}) holds trivially. By definition, for the general case we have
  \begin{align*}
      \tr^{(d)}(\alpha)&=\tr^{(d-1)}(x\theta_0(\alpha)+yw\theta_1(\alpha)+zw^{-1}\theta_{-1}(\alpha))\\
      &=\tr^{(d-1)}(x\theta_0(\alpha))+\tr^{(d-1)}(yw\theta_1(\alpha))+\tr^{(d-1)}(zw^{-1}\theta_{-1}(\alpha))\\
      &=\sum_{k=1}^d x\tr^{(d-1)}(\alpha_{0,k})+yw\tr^{(d-1)}(\alpha_{1,k})+zw^{-1}\tr^{(d-1)}(\alpha_{-1,k}).
  \end{align*}
  Applying induction hypothesis, we obtain
  {\small
  \begin{align}
      \tr^{(d)}(\alpha)&=(d-1)!\sum_{k=1}^d (tr^{(0)}\circ \widetilde{\rho} )(x\alpha_{0,k}+yw\alpha_{1,k}+zw^{-1}\alpha_{-1,k})\nonumber\\
      &=(d-1)!\left(\sum_{v\in X^{d-1}}\sum_{k=1}^d x\tr^{(0)}(\lambda_vv(\alpha_{0,k}))+yw\tr^{(0)}(\lambda_vv(\alpha_{1,k}))+zw^{-1}\tr^{(0)}(\lambda_vv(\alpha_{-1,k}))\right).\label{finalsum}
  \end{align}
  }
  By Remark \ref{forth4}, we have that for any $u\in X^d$, the element $\lambda_u u(\alpha)$ appears $d$ times in the sum into the brackets of Eq. (\ref{finalsum}). Therefore, we conclude that
$$\tr^{(d)}(\alpha)=(d-1)!\left(d\sum_{u\in X^d} tr^{(0)}(\lambda_u u(\alpha))\right) =d!(tr^{(0)}\circ  \widetilde{\rho} )(\alpha).$$
  
\end{proof}
Thus, for a cts--link $(L,I)$ we define:
\begin{equation}\label{paregen}
    \widehat{\Upsilon}_{x,y,z}(L,I):=\left(\frac{1}{\A w}\right)^{n-1}\left(\frac{\tr\circ \pi(\alpha)}{d!}\right),
\end{equation}
where $\alpha$ is a tied singular braid such that $\hat{\alpha}=(L,I)$. Then, we have that the function $\widehat{\Upsilon}$ recovers the cts--links invariant $\widetilde{\Upsilon}_{x,y,z}$ from Section \ref{newinv}.

\begin{remark}
Consider $u=v$ and $z=0$, and let be $\tau:\s{}{n}\rightarrow H_n(u)$ the homomorphism induced by the map $R_i\mapsto h_i$ and $E_i,S_i\mapsto 1$. Then, $\Pi_{x,y}:=\tau\circ \rho_{x,y,0}$ is a representation of $SB_n$ into the Iwahori--Hecke algebra $H_n$.
% $$
%     \xymatrix{SB_n \ar[r]^{\rho_{x,y}} & \s{}{n}(u)\ar[r]^{\tau} \ar[dr]_{\tr} &  H_n \ar[d]^{{\rm Tr}} \\   & &\mathbb{L}(\A,\bb)  } 
% $$
Thus, the Paris Rabenda invariant can be obtained in terms of the representation $\Pi_{x,y}:=\tau\circ \rho_{x,y,0}$ and the Ocneanu's trace $\mathsf{tr}$. In fact, by setting $X=y$ and $Y=x$, we have
\begin{equation*}
    \widehat{I}(L)=\left(\frac{1}{\A w}\right)^{n-1}\left(\frac{{\sf tr}\circ \Pi(\alpha)}{d!}\right),
\end{equation*}
where $L=\hat{\alpha}$, $\alpha\in SB_n$.
\end{remark}

\subsection*{Acknowledgements}
The first author was partially supported by grant UVA22991 (Proyecto PUENTE UV).

\bibliography{sbt}{}
\bibliographystyle{acm}

\bigskip

\noindent 
{\small \sc Instituto de Matemáticas, Universidad de Valparaíso,\\
Gran Breta\~{n}a 1111, Valpara\'{i}so 2340000, Chile.\\
{\tt marcelo.flores@uv.cl}\\
}

\smallskip
\noindent
{\small \sc Centro de Investigaci\'on y Docencia Econ\'omicas A.C.,\\
Carretera M\'exico-Toluca No. 3655, Ciudad de M\'exico 01210, M\'exico.\\
{\tt christopher.roque@cide.edu}}

\end{document}